\documentclass[11pt,reqno]{amsart}
\usepackage{graphicx, subfigure}
\usepackage{amssymb}
\usepackage{epstopdf}
\usepackage{amssymb}
\usepackage[a4paper, total={6.6in, 9.9in}]{geometry}
\usepackage{multirow}
\usepackage{bm}
\usepackage{amsmath,amsfonts,amsthm,mathrsfs,amssymb}
\usepackage{mathtools}
\usepackage{framed}
\usepackage{exscale}
\usepackage{relsize}
\usepackage{enumerate}
\usepackage{epstopdf}
\usepackage{latexsym}
\usepackage[usenames]{color}
\usepackage{hyperref}
\usepackage{graphics}
\usepackage{framed}
\usepackage{tikz}
\usepackage{enumerate}
\usepackage{xcolor}


\numberwithin{equation}{section}

\newtheorem{definition}{Definition}[section]
\newtheorem{theorem}{Theorem}[section]
\newtheorem{lemma}[theorem]{Lemma}

\newtheorem{corollary}[theorem]{Corollary}

\theoremstyle{definition}

\theoremstyle{remark}
\newtheorem{remark}[theorem]{Remark}

\numberwithin{equation}{section}

\newcounter{saveeqn}


\begin{document}

\title[Spectral properties of transmission eigenmodes and applications]{Spectral properties of surface-localized transmission eigenmodes and applications to inverse scattering problems}

\author{Yan Jiang}
\address{Department of Mathematics, City University of Hong Kong, Hong Kong SAR, China.}
\email{yjian24@cityu.edu.hk}

\author{Hongyu Liu}
\address{Department of Mathematics, City University of Hong Kong, Hong Kong SAR, China.}
\email{hongyu.liuip@gmail.com, hongyliu@cityu.edu.hk}

\author{Kai Zhang}
\address{Department of Mathematics, Jilin University, Changchun, Jilin, China.}
\email{zhangkaimath@jlu.edu.cn}

\author{Haoran Zheng}
\address{School of Mathematical Sciences, South China Normal University, Guangzhou, Guangdong, China.}
\email{hrzheng@scnu.edu.cn}

\keywords{Spectral geometry, transmission eigenfunctions, surface-localizing, density}

\thanks{}

\date{}

\subjclass[2010]{35R30, 35P25, 34L25}

\maketitle

\begin{abstract}
	This paper investigates a distinctive spectral pattern exhibited by transmission eigenfunctions in wave scattering theory. Building upon the discovery in \cite{CDHLW21,CDLS23} that these eigenfunctions localize near the domain boundary, we derive sharp spectral density estimates—establishing both lower and upper bounds—to demonstrate that a significant proportion of transmission eigenfunctions manifest this surface-localizing behavior. Our analysis elucidates the connection between the geometric rigidity of eigenfunctions and their spectral properties. Though primarily explored within a radially symmetric framework, this study provides rigorous theoretical insights, advances new perspectives in this emerging field, and offers meaningful implications for inverse scattering theory.
\end{abstract}


\section{Physical origin and connection to existing studies}\label{Background}
{In wave scattering theory, an incident probing field $u^i$ interacts with an inhomogeneous medium represented by the parameter pair $(\Omega, \mathbf{n})$, where $\Omega:=\operatorname{supp}(\mathbf{n}-1)$ denotes the spatial support of medium inhomogeneity and $\mathbf{n}$ characterizes the refractive index distribution.} This interaction generates a total field $u$ and scattered field $u^s$, related through:
\begin{equation*}
	u^s = u - u^i,
\end{equation*}
where the incident field $u^i$ constitutes an entire solution to the Helmholtz equation:
\begin{equation*}
	\left(\Delta+k^2\right) u^i=0 \quad \text { in } \mathbb{R}^N, \quad\quad N = 2,3,
\end{equation*}
with $k$ representing the normalized angular wavenumber. Then the scattering process is governed by the following boundary value problem:
\begin{equation}\label{eq:scattering}
	\begin{cases}
		\Delta u+k^2 \mathbf{n}^2 u=0 \quad \text { in } \mathbb{R}^N, \\
		\lim\limits_{r \rightarrow+\infty} r^{(N-1) / 2}\left(\partial_r u^s-\mathrm{i} k u^s\right)=0,
	\end{cases}
\end{equation}
where $\mathrm{i} = \sqrt{-1}$, $r = |x|$, and $\partial_r u := \hat{x}\cdot\nabla_x u$ with $\hat{x} = x/|x| \in \mathbb{S}^{N-1}$.
According to the results in \cite{CK19,LL23}, the scattering problem \eqref{eq:scattering} admits a unique solution $ u \in H_{loc}^1(\mathbb{R}^N)$, which satisfies the following asymptotic behavior:
\begin{equation*}
	u^s(x)=\frac{e^{\mathrm{i} k r}}{r^{(N-1) / 2}} u_{\infty}(\hat{x})+\mathcal{O}\left(\frac{1}{r^{(N+1) / 2}}\right), \quad \mbox{as} \quad  r\rightarrow+\infty.
\end{equation*}
The function $u_{\infty}(\hat{x})$ is referred to as the far-field pattern, which encapsulates the information of how the incident wave $u^i$ is perturbed by the presence of the scatterer characterized by the parameters $(\Omega , \mathbf{n})$.

The inverse scattering problem, which holds substantial importance in industrial applications, focuses on reconstructing the scatterer's configuration $(\Omega, \mathbf{n})$ from far-field measurements $u_\infty(\hat{x})$. Mathematically, this reconstruction problem admits the nonlinear operator formulation:
\begin{equation}\label{inverse}
	\mathcal{F}(\partial\Omega) = u_\infty(\hat{x}),
\end{equation}
where the operator $\mathcal{F}$ encodes the physical scattering process governed by system \eqref{eq:scattering}. A particularly interesting phenomenon in scattering theory is the non-scattering condition (commonly termed \emph{invisibility/transparency}), characterized by the vanishing far-field pattern $u_\infty \equiv 0$. Under this condition, Rellich's Uniqueness Theorem \cite{CK19} establishes that the scattered field must identically vanish in the exterior domain:
\begin{equation*}
	u^s(x) = 0 \quad \forall x \in \mathbb{R}^N \setminus \overline{\Omega}.
\end{equation*}
Based on the above observation, if invisibility/transparency occurs, the total field $\left.u\right|_{\Omega} \in H^1(\Omega)$ and the incident field $v:=\left.u^i\right|_{\Omega} \in H^1(\Omega)$ fulfill the transmission eigenvalue problem \eqref{eq:trans1}. This result has profound implications for inverse problems and cloaking applications \cite{ABGKLW15,CK19,LL23}.

Invisibility and reconstruction are two sides of the same coin, both fundamentally linked to the transmission eigenvalue problem, which is a vibrant research area in applied mathematics driven by significant theoretical value and practical applications in inverse scattering theory. The spectral properties of transmission eigenvalues play a pivotal role in several key domains such as material parameter identification, computational reconstruction schemes, and cloaking device design. The discrete distribution of transmission eigenvalues has been established in \cite{BCH11,CK10,LV12,NN17,S12}, while results on the location of transmission eigenvalues can be found in \cite{CGH10,LC12,V15,V17}. In addition, substantial work has been devoted to related topics such as the Weyl law and the completeness of generalized eigenfunctions \cite{BP13,FN23,LV12b,LV15,NN21,R16}. These properties have been extensively studied in the literature, and we refer to \cite{CCG10,CCH22,CK19} for more comprehensive surveys on this topic.

Conversely, the challenge of reconstruction has opened a new dimension: while previous research mainly focused on transmission eigenvalues, the investigation of transmission eigenfunctions has revealed a wealth of geometric structures both local and global that provide critical insights for reconstruction. These findings have subsequently been exploited in two main directions:

\begin{itemize}
	\item[(1)] \textbf{Local Geometric Properties} {The works} \cite{BLLW17,BL17,BL21} revealed that transmission eigenfunctions tend to vanish near corners or points of high curvature on $\partial \Omega$. This discovery of local geometric patterns sparked extensive research, leading to similar vanishing phenomena across various physical settings \cite{DDL22, DCL21}. Building on these local geometric properties, a novel inverse scattering scheme is developed which effectively achieves reconstruction for media with cusp singularities \cite{LLL17}.
	
	\item[(2)] \textbf{Global Geometric Structures} Alongside these local geometric properties, recent studies have revealed striking global geometric structures of transmission eigenfunctions. In \cite{CDHLW21}, it is shown that there exists an infinite sequence of transmission eigenfunctions whose $L^2$-energy concentrates on $\partial \Omega$. Such geometrical property is subsequently exploited to develop a super-resolution wave imaging scheme associated with the acoustic scattering. Later, in \cite{CDLS23, DJLZ22,JLZZ22}, new type of surface-localized transmission eigenmodes are observed for the acoustic transmission eigenvalue problem. These surface-localizing phenomena have since been extended to other physical models, including electromagnetic, acoustic-elastic, and elastic transmission problems \cite{DLWW22, DLLT23, FD23,JLZZ23}. Such global geometric properties have been successfully leveraged to develop super-resolution wave imaging schemes and reconstruct sound-soft and sound-hard obstacles \cite{HLW23,LLWW19}.
\end{itemize}

The present article continues the study of the global properties of transmission eigenfunctions in the context of wave scattering. The results of this paper reveal profound phenomena in addressing the reconstruction problem \eqref{inverse}. Although real transmission eigenvalues imply a vanishing far-field pattern, a state of invisibility, our findings demonstrate that the surface-localization of the corresponding eigenfunctions enables us to locate the scatterer's boundary. By identifying these eigenfunctions and using specialized methods to map their concentrated energy, the boundary's position can be determined.

In fact, some existing results in this direction have utilized the inside-outside duality to determine the transmission eigenvalues \cite{LR15}. Subsequently, by applying the far-field regularization techniques, the eigenfunctions can be reconstructed, thereby enabling the identification of the region through localization of their energies concentration positions. This implies that, although real eigenvalues classically correspond to invisibility, it is important to note that a subset of these eigenvalues are in fact ``pseudo-invisible"  in the sense that the boundary of the region can still be reconstructed via the above procedure.

Moreover, it is known that the ideal form of invisibility should achieve the effect of ``water blending into water", that is, when the refractive index $\mathbf n$ equals 1. Our conclusions, as stated in Theorems \ref{thm:lower_bdd} and \ref{thm:upper_bdd}, provide upper and lower bounds for the density of the surface-localized transmission eigenfunctions. As $\mathbf n$ approaches 1, the lower bound tends toward 0, and the upper bound also decreases, indicating that the likelihood of such pseudo-invisibility diminishes. In other words, the system approaches true invisibility.

Finally, we briefly discuss two potential practical implications of our results:
\begin{itemize}
	\item[(1)] \textbf{Shape Identification}: The surface localization of transmission eigenfunctions suggests that they encode detailed geometric information about the underlying scattering medium. This behavior reflects a form of global geometric rigidity, making surface-localized modes particularly effective for medium characterization \cite{HLLW24, HLW23}. For the inverse acoustic problem \eqref{inverse} discussed above, the general procedure can be roughly described as recovering those trapped transmission eigen-modes by using the exterior wave measurement, and then using the boundary-localizing behaviours to identify $\partial \Omega$. Numerical studies have revealed an intriguing phenomenon:  it is often possible to uniquely reconstruct \((\Omega;\mathbf n)\) from merely a few far‑field measurements $u_{\infty}(\hat x)$. Our results provide a rigorous
	justification on the super-resolution imaging effect that it can produce. Applying our results in Theorems \ref{thm:lower_bdd} and \ref{thm:upper_bdd}, one suggests that for a generic domain $(\Omega, \mathbf{n})$, a single randomly selected incident wave leads to unique identification of the region's shape with high probability.
	\item[(2)] \textbf{Cloaking Design}: In \cite{CDHLW21},  a new interpretation of the invisibility cloaking was proposed, based on the surface localization of acoustic transmission eigenfunctions. According to our results in Theorems \ref{thm:lower_bdd} and \ref{thm:upper_bdd}, there is a high probability that when invisibility/transparency occurs, the propagating wave travels along the surface of the scattering object $(\Omega, \mathbf{n})$, effectively ``sliding" over its surface before resuming its original trajectory. This insight provides a new physical intuition into the design of artificial cloaking devices: achieving surface-resonant or surface-guided wave behavior could be a key mechanism for realizing effective invisibility. Hence, our results offer theoretical support for surface-based cloaking strategies (see \cite{LLLW17} for the corresponding electromagnetic transmission problem).
\end{itemize}

The remainder of this paper is organized as follows. In Section~\ref{sub:main results}, we present the preliminary and main results of this work. In Section~\ref{2Dcase}, we establish a uniform lower bound for $\rho_{\varepsilon,\delta}$ in $\mathbb{R}^2$ and derive an upper bound for $\rho_{\tilde{\varepsilon},\delta}$ dependent on $\delta$. In Section~\ref{3Dcase}, we extend our analysis to three dimensions, investigating both lower bound for $\rho_{\varepsilon,\delta}$ and upper bounds for $\rho_{\tilde{\varepsilon},\delta}$.


\section{Mathematical setup and statement of the main results}\label{sub:main results}

Let $\Omega$ be a unit ball in $\mathbb{R}^N$, $N=2, 3$, and $\mathbf{n}$ be a positive constant with $\mathbf{n} \neq 1$. In this article, we are mainly concerned with the following transmission eigenvalue problem for $u,v \in H^1(\Omega)$:
\begin{equation}\label{eq:trans1}
	\left\{
	\begin{array}{ll}
		\Delta u+k^2\mathbf{n}^2 u=0  &\text{in} \ \Omega,\medskip \\
		\Delta v+k^2 v =0 &\text{in} \ \Omega, \medskip \\
		\displaystyle{u=v,\ \ \frac{\partial u}{\partial\nu}=\frac{\partial v}{\partial\nu} } &\text{on} \ \partial \Omega, \\
	\end{array}
	\right.
\end{equation}
where $\nu$ is the exterior unit normal vector to $\partial\Omega$. It is obvious that $u \equiv v \equiv 0$ are a pair of trivial solutions to \eqref{eq:trans1}. If for a certain $k\in\mathbb{R}_+$, there exists a non-trivial pair of solutions $(u,v)$, then $k$ is called a (real) transmission eigenvalue, and $(u,v)$ is referred to as the corresponding pair of transmission eigenfunctions.

It follows from \cite{CCH22,LC12} that the transmission eigenvalue problem \eqref{eq:trans1} admits a discrete spectrum of infinitely many real eigenvalues, which only accumulate at infinity. Moreover, numerical observations in \cite{CDHLW21}  suggest that a significant portion of the corresponding eigenfunctions exhibit surface localization, in the sense that their $L^2$-energy concentrates in a neighborhood of $\partial \Omega$. Representative numerical examples are presented in Fig. \ref{surface_localized}. It is the aim of this paper to derive a theoretical understanding of this peculiar spectral phenomenon.  In the present analysis, the assumption of radial symmetry serves a structural purpose: it allows us to isolate geometric effects from those due to material contrast and to rigorously examine the role of the refractive index in shaping the spatial behavior of transmission eigenfunctions.
\begin{figure}[h]
	\centering
	\includegraphics[width=0.9\linewidth]{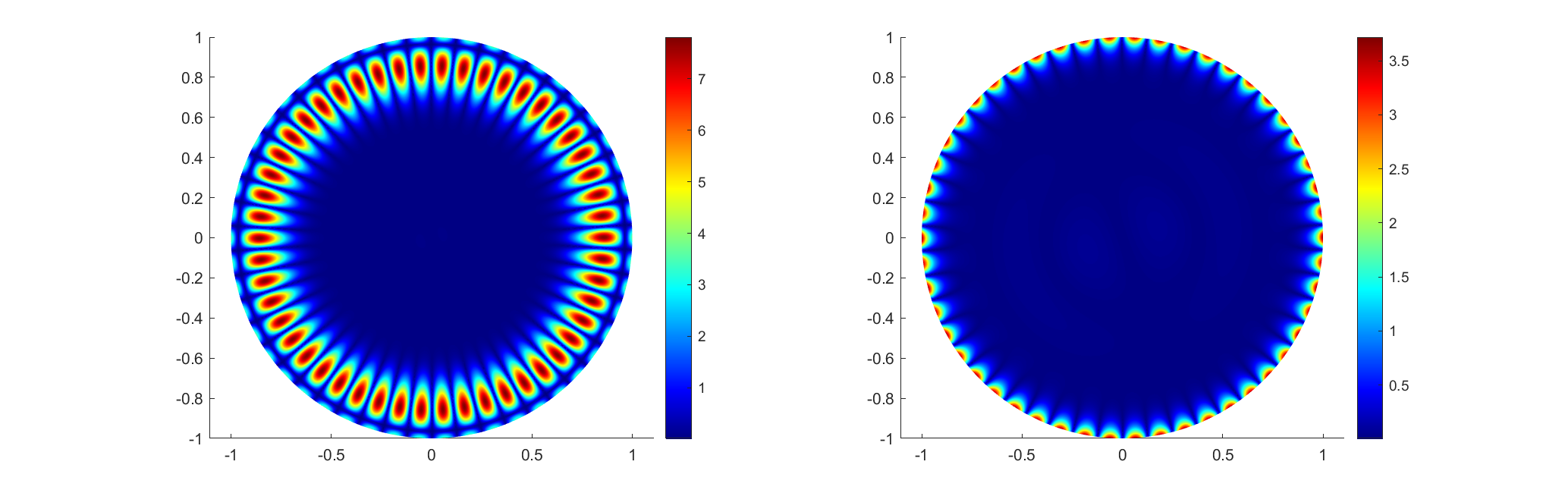}
	\caption{The graph of transmission eigenfunctions $u$(left) and $v$(right) to system \eqref{eq:trans1} associated with $\mathbf{n}=10$ and $k=101.84852668$. }
	\label{surface_localized}
\end{figure}

To begin with, we introduce a qualitative definition of surface-localized function. In what follows, for $\delta \in \mathbb{R}_{+}$, we define
\begin{equation*}
	\mathcal{N}_{\delta}(\partial \Omega):=\{x \in \Omega ; \operatorname{dist}\left(x, \partial \Omega\right)<\delta\},
\end{equation*}
where dist signifies the Euclidean distance in $\mathbb{R}^N, N=2,3$. Clearly, $\mathcal{N}_{\delta}\left(\partial \Omega\right)$ defines an $\delta$-neighborhood of $\partial \Omega$.
\begin{definition}\label{def:varepsilon surface-localized}
	A function $\psi\in L^2\left(\Omega\right)$ is defined as \emph{surface-localized} on $\mathcal{N}_{\delta}\left(\partial \Omega\right)$, if given $\varepsilon>0$ and $\delta>0$, the following inequality holds
	\begin{equation*}
		\mathcal{E}_{\delta}\left(\psi\right) > 1 - \varepsilon,
	\end{equation*}
	where $\mathcal{E}_{\delta}\left(\psi\right):=\frac{\|\psi\|_{L^2\left(\mathcal{N}_{\delta}\left(\partial \Omega\right)\right)}}{\|\psi\|_{L^2\left(\Omega\right)}}$.
\end{definition}
We emphasize that $\varepsilon$ and $\delta$ in Definition \ref{def:varepsilon surface-localized} are independent constants. Meanwhile, from the definition, it is clear that for sufficiently small $\varepsilon$ and $\delta$, the $L^2$-energy of $\psi$ concentrates on the boundary.

To quantitatively characterize the density of surface-localized transmission eigenfunctions, we introduce the following counting functions. First, we define the counting function for real positive ITEs of \eqref{eq:trans1} less than $R$, counted with geometric multiplicities, as:
\begin{equation*}
	N(R) := \sharp \Big\{ (u,v,k) : k \text{ is an ITE},\  0 < k < R \Big\}.
\end{equation*}
Furthermore, to measure the number of ITEs below
$R$ for which at least one associated eigenfunction is surface-localized, we introduce the set  $E^c\left(R, \varepsilon, \delta \right)$, whose elements are triples $\left(u,v,k\right)$ satisfying the following conditions:
\begin{itemize}
	\item[(i)] $k$ is an interior transmission eigenvalue (ITE) of system \eqref{eq:trans1} with $0<k<R$;
	\item[(ii)] The corresponding eigenfunctions $u$ or $v$ are surface-localized (with respect to parameters
	$\delta$ and $\varepsilon$).
\end{itemize}
More precisely, the set is given by
\begin{equation}\label{eq:E_old}
	\begin{aligned}
		E^c\left(R, \varepsilon, \delta \right)  = \left\{ \left(u,v,k\right) ; \text{$k$ is an ITE, }0< k < R, \, \mathcal{E}_{\delta}\left(\psi\right) > 1 - \varepsilon, \, \psi = u \text{ or } v\right\}.
	\end{aligned}
\end{equation}
Accordingly, we define the counting function with geometric multiplicities for the set \eqref{eq:E_old} as
\begin{equation}\label{eq:count_E}
	N^c(R, \varepsilon, \delta) := \sharp E^c(R, \varepsilon, \delta).
\end{equation}
Conversely, the set of ITEs for which neither eigenfunction is surface-localized is defined by
\begin{equation}\label{eq:E^uc_old}
	\begin{aligned}
		E^{uc}\left(R, \varepsilon, \delta \right)  = \left\{ \left(u,v,k\right) ; \text{$k$ is an ITE, }0< k < R, \, \mathcal{E}_{\delta}\left(\psi\right) \leq 1 - \varepsilon, \, \psi = u \text{ and } v\right\}.
	\end{aligned}
\end{equation}
\begin{remark}
	If $(u,v,k) \in E^c(R, \varepsilon, \delta)$, then for any $a \in \mathbb{C} \setminus \{0\}$, the triple $(au, av, k)$ also belongs to $E^c(R, \varepsilon, \delta)$. Hence, $E^c(R, \varepsilon, \delta)$ is essentially a quotient set under scalar multiplication. In subsequent sections, we always assume elements are identified up to multiplication by a nonzero constant.
\end{remark}
\begin{remark}
	The elements of these sets are triples, which is essential since the eigenvalue alone does not uniquely determine the eigenfunctions due to possible multiplicities. In other words, transmission eigenvalues are not necessarily simple. For instance, in two dimensions, the eigenfunction pairs $(u,v)$ take the form:
	\begin{equation*}
		u = C_1 J_m(k\mathbf{n} r) e^{\mathrm{i}m\theta}, \quad v = C_2 J_m(k r) e^{\mathrm{i}m\theta}, \quad m \in \mathbb{N}_+,
	\end{equation*}
	where $J_m$ is the $m$-th order Bessel function. However, the pair
	\begin{equation*}
		u = C_1 J_m(k\mathbf{n} r) e^{-\mathrm{i}m\theta}, \quad v = C_2 J_m(k r) e^{-\mathrm{i}m\theta}, \quad m \in \mathbb{N}_+,
	\end{equation*}
	are also eigenfunctions associated with the \textit{same} eigenvalue $k$, illustrating that $k$ is not a simple eigenvalue.
\end{remark}

Our primary interest lies in the spectral density of surface-localized eigenmodes for transmission eigenvalue problem \eqref{eq:trans1}, characterized by the following limit:
\begin{equation*}\label{eq:def_density}
	\rho_{\varepsilon, \delta} :=\liminf\limits_{R\rightarrow\infty}\frac{N^c\left(R,\varepsilon, \delta\right)}{N\left(R\right)}.
\end{equation*}
For notation simplicity, we introduce some auxiliary functions. Define $P^{(N)}: [-1,1] \to \mathbb{R}$ as
\begin{equation}\label{eq:P2P3}
	P^{(N)}(x)=
	\left\{
	\begin{array}{ll}
		\displaystyle \arccos(x) - x \sqrt{1 - x^2},  & N=2,\\
		\displaystyle (1 - x^2)^{3/2},  & N=3.
	\end{array}
	\right.
\end{equation}
Additionally, define $g: \mathcal{D} \to \mathbb{R}$ on the domain $\mathcal{D} = \{(x,y) \in \mathbb{R}^2 : 0 < x < 1, y > 1/x\}$ by
\begin{equation}\label{eq:g}
	g(x,y)=\sqrt{ x^2-\frac{1-x^2}{y^2-1}}.
\end{equation}

The main results are listed in the following theorems.
\begin{theorem}[Lower bound]\label{thm:lower_bdd}
	For given $0<\varepsilon<\frac{1}{2}$, $0<\delta<1$, and refractive index $0<\mathbf{n}<1$,
	we have
	\begin{equation*}
		\rho_{\varepsilon, \delta} > 0.
	\end{equation*}
	Furthermore, the following dimension-specific bounds hold:
	\begin{equation}\label{eq:lower_bdd}
		\rho_{\varepsilon, \delta} \geq
		\left\{
		\begin{array}{ll}
			\displaystyle \frac{2 P^{(2)}(\mathbf{n})}{\pi\left(1-\mathbf{n}^2\right)},  & \mbox{in}~\mathbb{R}^2,\\
			\displaystyle \frac{P^{(3)}(\mathbf{n})}{\left(1-\mathbf{n}^3\right)},  & \mbox{in}~\mathbb{R}^3,
		\end{array}
		\right.
	\end{equation}
	where $P^{(2)}$ and $P^{(3)}$ are specified in \eqref{eq:P2P3}. Crucially, these lower bounds depend solely on $\mathbf{n}$ and are independent of $\varepsilon$ and $\delta$.
\end{theorem}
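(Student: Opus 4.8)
The plan is to exploit the rotational symmetry to diagonalize \eqref{eq:trans1} into a family of scalar dispersion relations, and then run a mode-by-mode Weyl count that isolates the surface-localized eigenmodes from the rest. Writing the eigenpairs as $u=C_1J_m(k\mathbf n r)e^{\mathrm im\theta}$, $v=C_2J_m(kr)e^{\mathrm im\theta}$ in $\mathbb R^2$ (and with spherical Bessel functions and spherical harmonics of degree $\ell$, geometric multiplicity $2\ell+1$, in $\mathbb R^3$), the transmission conditions at $r=1$ force the $2\times2$ Cauchy determinant to vanish, so for each mode the ITEs are the positive zeros of
\[
f_m(k)=\mathbf n\,J_m(k)J_m'(k\mathbf n)-J_m(k\mathbf n)J_m'(k),
\]
and any nontrivial mode has $C_1,C_2\neq0$ (otherwise zero Cauchy data and unique continuation force $u\equiv v\equiv 0$). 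Everything is governed by the two turning points $r=m/k$ (for $v$) and $r=m/(k\mathbf n)$ (for $u$), tracked through the scale-invariant ratio $a=m/k$.

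I would first settle the localization criterion. When $a>\mathbf n$, i.e. $k\mathbf n<m$, the argument of $J_m(k\mathbf n\,\cdot)$ never reaches its order on $[0,1]$, so $u$ is monotone (evanescent) with boundary growth rate $\sqrt{m^2-k^2\mathbf n^2}=k\sqrt{a^2-\mathbf n^2}\to\infty$ for fixed $a>\mathbf n$. Using the uniform Debye--Olver asymptotics (or monotonicity bounds for Bessel functions), I would show that for every fixed $\eta>0$ there is $K_0=K_0(\varepsilon,\delta,\eta)$ such that each eigenmode with $a\geq \mathbf n+\eta$ and $k>K_0$ satisfies $\mathcal E_\delta(u)>1-\varepsilon$, hence lies in $E^c(R,\varepsilon,\delta)$. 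The margin $\eta$ is needed because the skin depth degenerates as $a\downarrow\mathbf n$; it is sent to $0$ only at the end, and the finitely many modes with $k\le K_0$ do not affect the density in \eqref{eq:def_density}.

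The heart of the argument is the count. With the phase functions $\Psi_v(k)=\sqrt{k^2-m^2}-m\arccos(m/k)$ and $\Psi_u(k)=\sqrt{k^2\mathbf n^2-m^2}-m\arccos(m/(k\mathbf n))$, the uniform asymptotics turn $f_m(k)=0$ into $\tan\Psi_v=Q^2\tan\Psi_u$ with $Q^2=\sqrt{(\mathbf n^2-a^2)/(1-a^2)}$. In the singly-oscillatory range $\mathbf n<a<1$ (only $v$ oscillatory) the right side is essentially constant, giving exactly $\tfrac1\pi\Psi_v$ real roots, all surface-localized by the previous step. In the doubly-oscillatory range $a<\mathbf n$ the delicate issue is to separate real from complex ITEs: introducing the monotone lift $\phi$ of $\arctan(Q^2\tan(\cdot))$, one checks that $g(k)=\Psi_v(k)-\phi(\Psi_u(k))$ has $g'=\sqrt{1-a^2}-\phi'\sqrt{\mathbf n^2-a^2}\in\big[0,(1-\mathbf n^2)/\sqrt{1-a^2}\big]\ge 0$, so $g$ is nondecreasing and the real roots number $\tfrac1\pi\Delta g=\tfrac1\pi(\Psi_v-\Psi_u)+O(1)$ — the \emph{difference}, not the sum, of the phases, the remaining $\tfrac2\pi\Psi_u$ modes being complex. \textbf{This real/complex dichotomy in the doubly-oscillatory regime is the main obstacle}, and it is exactly what produces the factor $1-\mathbf n^N$ rather than $1+\mathbf n^N$ in the total count.

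Summing over modes then finishes the proof. The two regimes telescope to a single-mode count $\tfrac1\pi\big(\Psi_v(R)-\Psi_u(R)\big)$ (the $\Psi_u$ term present only for $m<R\mathbf n$), and integrating in $m$ (resp. in $\ell$ against the weight $2\ell$), using $\int_0^1(\sqrt{1-t^2}-t\arccos t)\,dt=\pi/8$ and $\int_0^1 t(\sqrt{1-t^2}-t\arccos t)\,dt=1/9$, gives $N(R)\sim\frac{1-\mathbf n^2}{4}R^2$ in $\mathbb R^2$ and $N(R)\sim\frac{2(1-\mathbf n^3)}{9\pi}R^3$ in $\mathbb R^3$. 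Restricting to $a>\mathbf n+\eta$ and evaluating the \emph{same} antiderivatives on $[\mathbf n,1]$ produces precisely $P^{(N)}$ from \eqref{eq:P2P3}, so that $N^c(R,\varepsilon,\delta)\ge \frac{P^{(2)}(\mathbf n)}{2\pi}R^2(1+o(1))$ and $\ge\frac{2P^{(3)}(\mathbf n)}{9\pi}R^3(1+o(1))$ as $\eta\downarrow0$. Taking the quotient in \eqref{eq:def_density} yields $\rho_{\varepsilon,\delta}\ge \frac{2P^{(2)}(\mathbf n)}{\pi(1-\mathbf n^2)}$ and $\rho_{\varepsilon,\delta}\ge\frac{P^{(3)}(\mathbf n)}{1-\mathbf n^3}$, both strictly positive and independent of $\varepsilon,\delta$, which is \eqref{eq:lower_bdd}.
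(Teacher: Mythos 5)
Your proposal is correct in substance: it reproduces the paper's overall architecture (reduction to the scalar dispersion relations $F_m(k)=0$, identification of the sub-turning-point regime $k\mathbf{n}<m$ as the surface-localized one, a mode-by-mode count followed by summation in $m$, and division by the Weyl asymptotics of $N(R)$), and all of your constants check out: $\int_0^1(\sqrt{1-t^2}-t\arccos t)\,dt=\pi/8$, $\int_0^1 t(\sqrt{1-t^2}-t\arccos t)\,dt=1/9$, $N(R)\sim\frac{1-\mathbf{n}^2}{4}R^2$ resp. $\frac{2(1-\mathbf{n}^3)}{9\pi}R^3$, and the final ratios agree with \eqref{eq:lower_bdd}. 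The genuine differences are in the key lemmas. For localization, the paper proves $\mathcal{E}_\delta(u)>1-\varepsilon$ for \emph{every} mode with $k<m/\mathbf{n}$ and $m>C(\varepsilon,\delta)$ by elementary means (Krasikov's bound on the logarithmic derivative of $x^{-m}J_m(x)$ plus a Tur\'an-type inequality), uniformly up to the turning point; you use Debye--Olver asymptotics and must keep the margin $a\ge\mathbf{n}+\eta$, removing $\eta$ by a limit at the end --- legitimate, since the limiting bound is $\eta$-free, but an extra step. More significantly, the paper never proves the per-mode count of real ITEs: it cites Pham--Stefanov for $N_m(R)=2\bigl(N_m^B(R)-N_m^B(\mathbf{n}R)+\gamma_m\bigr)$ and Weyl's law \cite{SV97} for the resulting sums, so the only zero-counting it performs is the Sturm/Hartman estimate for $N_m^B$. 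You instead propose to rederive the Pham--Stefanov formula from scratch via $\tan\Psi_v=Q^2\tan\Psi_u$ and the monotone-lift dichotomy in the doubly-oscillatory range. Your identification of that dichotomy as the crux is exactly right --- it is what produces $1-\mathbf{n}^N$ rather than $1+\mathbf{n}^N$ in the denominator --- but note two things: (i) this is precisely the content of the result the paper cites \cite{PS14}, so you are re-proving a known theorem rather than bypassing it, and a rigorous version must control the asymptotic error terms near the turning points and the $k$-dependence of $Q^2$ (your chain-rule computation of $g'$ treats $Q^2$ as frozen, which is only valid modulo lower-order corrections); and (ii) the dichotomy is needed only for the \emph{denominator} $N(R)$, since the surface-localized numerator lives entirely in the singly-oscillatory regime where no real/complex separation arises. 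In short, your route buys self-containedness at the cost of redoing the hardest cited input, while the paper isolates the genuinely new analytic content (the localization estimates) and outsources the spectral counting.
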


\begin{theorem}[Upper bound]\label{thm:upper_bdd}
	Given $0 < \mathbf{n} < 1$, $0 < \delta < 1$, and $0<\frac{\delta}{\mathbf{n}\left(1-\delta\right)}<\tilde{\delta}$, define
	\begin{equation*}
		\tilde{\varepsilon}(\delta) = \frac{1}{4} g\left(1-\delta, 1 + \mathbf{n} \tilde{\delta}\right),
	\end{equation*}
	where $g$ is specified in \eqref{eq:g}. Then, we have
	\begin{equation*}\label{eq:upper_bdd}
		\rho_{\tilde{\varepsilon},\delta} \leq
		\left\{
		\begin{array}{ll}
			\displaystyle \frac{2 B_{U,\tilde{\delta}}^{(2)}\left(\mathbf{n}\right)}{\pi\left(1-\mathbf{n}^2\right)}  ,  & \mbox{in}~\mathbb{R}^2,\\
			\displaystyle \frac{B_{U,\tilde{\delta}}^{(3)}\left(\mathbf{n}\right)}{\left(1-\mathbf{n}^3\right)},  & \mbox{in}~\mathbb{R}^3,
		\end{array}
		\right.
	\end{equation*}
	where $B_{U,\tilde{\delta}}^{(2)}$ and $B_{U,\tilde{\delta}}^{(3)}$ are respectively defined as:
	\begin{equation*}
		\begin{aligned}
			&B_{U, \tilde{\delta}}^{(2)}(\mathbf{n})=P^{(2)}\left(\frac{\mathbf{n}}{1+\mathbf{n} \tilde{\delta}}\right)-P^{(2)}\left(\frac{1}{1+\mathbf{n} \tilde{\delta}}\right) \mathbf{n}^2,\\
			&B_{U, \tilde{\delta}}^{(3)}(\mathbf{n})=P^{(3)}\left(\frac{\mathbf{n}}{1+\mathbf{n} \tilde{\delta}}\right)-P^{(3)}\left(\frac{1}{1+\mathbf{n} \tilde{\delta}}\right) \mathbf{n}^3.
		\end{aligned}
	\end{equation*}
	Here, $P^{(2)}$ and $P^{(3)}$ are the auxiliary functions specified in \eqref{eq:P2P3}.
\end{theorem}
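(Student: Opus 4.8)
The plan is to reduce \eqref{eq:trans1} to a family of one-dimensional radial problems, to characterize surface localization through the turning points of the associated Bessel functions, and then to count the admissible angular modes by a phase/Weyl argument. Writing, in $\mathbb{R}^2$, $u=C_1J_m(k\mathbf{n} r)e^{\mathrm{i}m\theta}$ and $v=C_2J_m(kr)e^{\mathrm{i}m\theta}$ (and using spherical Bessel functions $j_\ell$ together with the multiplicity $2\ell+1$ in $\mathbb{R}^3$), the transmission eigenvalues of the $m$-th mode are the zeros of the cross-product determinant $D_m(k)=\mathbf{n} J_m'(k\mathbf{n})J_m(k)-J_m(k\mathbf{n})J_m'(k)$, while $\mathcal{E}_\delta$ reduces to a quotient of radial integrals $\int_{1-\delta}^1|J_m(\kappa r)|^2 r\,dr\,\big/\int_0^1|J_m(\kappa r)|^2 r\,dr$ with $\kappa=k$ or $k\mathbf{n}$. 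The governing parameter is the scaled turning point $\sigma:=m/k$: since $J_m(\kappa r)$ is evanescent for $r<m/\kappa$ and oscillatory for $r>m/\kappa$, the function $v$ has turning point $\sigma$ and $u$ the \emph{larger} turning point $\sigma/\mathbf{n}$. Hence $u$ is always the more strongly surface-localized member of the pair, and the event ``$u$ or $v$ is surface-localized'' is controlled by $u$ alone.

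The analytic heart of the upper bound is a \emph{necessary} condition for localization. Using the WKB envelope $|J_m(\kappa r)|^2\sim C(\kappa^2-m^2/r^2)^{-1/2}$, I would show that after the substitution $r^2=\theta^2+(1-\theta^2)s$, where $\theta=m/\kappa$ is the turning point, the radial mass density $|J_m(\kappa r)|^2r\,dr$ is proportional to $s^{-1/2}\,ds$ in \emph{both} dimensions (the extra $r^{-2}$ in the $\mathbb{R}^3$ envelope compensating the $r^2\,dr$ measure). Direct integration then gives $\mathcal{E}_\delta(\psi)^2=1-g(1-\delta,1/\theta)$, with $g$ exactly the function \eqref{eq:g}, because $g(1-\delta,1/\theta)^2=\big((1-\delta)^2-\theta^2\big)/(1-\theta^2)$ is precisely the $s$-coordinate of the inner shell face $r=1-\delta$. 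Evaluating $u$ at the largest admissible turning point $\theta=1/(1+\mathbf{n}\tilde\delta)$, i.e.\ at the threshold $\sigma=\sigma_*:=\mathbf{n}/(1+\mathbf{n}\tilde\delta)$, yields the maximal ratio $\mathcal{E}_\delta(u)=\sqrt{1-g}\le 1-\tfrac14 g=1-\tilde\varepsilon$; the factor $\tfrac14$ is deliberately wasteful, so as to absorb the WKB/turning-point errors and make the inequality hold uniformly in $m$. Monotonicity of $g$ in $\theta$ then shows that whenever $\sigma\le\sigma_*$ neither $u$ nor $v$ is localized, so $E^c(R,\tilde\varepsilon,\delta)$ is contained in the set of modes with $\sigma>\sigma_*$.

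It then remains to count the eigenvalues with $\sigma>\sigma_*$ and $k<R$, which I would carry out by the phase method. The crucial point — and the reason the bounds carry $1-\mathbf{n}^N$ rather than $1+\mathbf{n}^N$ — is that, where both $J_m(kr)$ and $J_m(k\mathbf{n} r)$ oscillate, the resonance condition $q_u\tan(\Theta_u-\tfrac{\pi}{4})=q_v\tan(\Theta_v-\tfrac{\pi}{4})$ produces eigenvalues at the rate $\tfrac1\pi(\Theta_v'-\Theta_u')$, the \emph{difference} of accumulated phases (via a homological-intersection count for a line of slope determined by $q_v/q_u$ against the $(1,1)$-curve on the $(\Theta_u,\Theta_v)$-torus), whereas where only $v$ oscillates the rate is $\tfrac1\pi\Theta_v'$. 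Here $\Theta_v(1)=k\,\Psi(\sigma)$ and $\Theta_u(1)=k\mathbf{n}\,\Psi(\sigma/\mathbf{n})$ with $\Psi(\sigma)=\sqrt{1-\sigma^2}-\sigma\arccos\sigma$, and integrating $\Psi$ reproduces the auxiliary functions $P^{(N)}$. Splitting $\{\sigma>\sigma_*\}$ into a $v$-part yielding $P^{(N)}\!\big(\mathbf{n}/(1+\mathbf{n}\tilde\delta)\big)$ and a $u$-part (supported on $\sigma<\mathbf{n}$) yielding $\mathbf{n}^N P^{(N)}\!\big(1/(1+\mathbf{n}\tilde\delta)\big)$, their difference is exactly $B_{U,\tilde\delta}^{(N)}(\mathbf{n})$; dividing by the total count $N(R)\sim c_N(1-\mathbf{n}^N)R^N$ (the same computation with $\sigma_*\to0$) and passing to the $\liminf$ gives \eqref{eq:upper_bdd}.

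The main obstacle is the rigorous, \emph{uniform-in-$m$} control of the two asymptotic inputs. For the localization criterion one must replace the heuristic envelope by genuine two-sided bounds on $\int_a^b|J_m(\kappa r)|^2 r\,dr$ valid up to the turning point and for small $m$; the degeneration of WKB near $r=m/\kappa$ is precisely what the $\tfrac14$ safety factor is meant to tolerate, but making this quantitative — through monotonicity of the Bessel amplitude or uniform Langer-type estimates — is delicate, and I expect it to be the hardest step. For the counting one needs the zero distribution of $D_m$ with an error term summable over the $O(R)$ relevant modes, which should rest on McMahon-type expansions and interlacing of Bessel zeros; it is the matching of the leading constants in the bounds for $N^c$ and for $N(R)$ that ultimately forces the clean ratio.
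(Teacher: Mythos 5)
Your proposal follows essentially the same route as the paper: the non-localization criterion for $k>\bigl(\tfrac{1}{\mathbf n}+\tilde\delta\bigr)m$ with the safety factor $\tilde\varepsilon=\tfrac14 g(1-\delta,1+\mathbf n\tilde\delta)$ is exactly the paper's Lemma on $1-\mathcal E_\delta^2$ (proved there by the strategy of \cite[Formula (2.15)]{CDHLW21}), and your phase-difference counting of the modes with $\sigma>\sigma_*$ is the paper's computation of $N(R)-\tilde N^{uc}(R,\tilde\varepsilon,\delta)$ via the Pham--Stefanov formula $N_m\approx 2\bigl(N_m^B(\cdot)-N_m^B(\mathbf n\,\cdot)\bigr)$, Bessel-zero asymptotics, and Euler--Maclaurin summation. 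The only cosmetic differences are that you count the localization-candidate set directly rather than its complement, and that you sketch re-derivations (torus intersection count, WKB envelope) of results the paper simply cites from \cite{PS14} and \cite{CDHLW21}.
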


\begin{remark}\label{rem:index-transform}
	In Theorems \ref{thm:lower_bdd} and \ref{thm:upper_bdd}, the condition $0<\mathbf{n}<1$ is assumed. However, we would like to point out that the same results also hold for $\mathbf{n}>1$. To see this, consider the following transformation:
	\begin{equation*}
		\tilde{k} = k\mathbf{n}, \quad \tilde{\mathbf{n}} = \mathbf{n}^{-1}, \quad \tilde{u} = v, \quad \tilde{v} = u.
	\end{equation*}
	One thus obtains
	\begin{equation*}
		\left\{
		\begin{array}{ll}
			\Delta \tilde{u}+\tilde{k}^2\tilde{\mathbf{n}}^2 \tilde{u}=0  &\text{in} \ \Omega,\medskip \\
			\Delta \tilde{v}+\tilde{k}^2 \tilde{v} =0 &\text{in} \ \Omega, \medskip \\
			\displaystyle{\tilde{u}=\tilde{v},\ \ \frac{\partial \tilde{u}}{\partial\nu}=\frac{\partial \tilde{v}}{\partial\nu} } &\text{on} \ \partial \Omega, \\
		\end{array}
		\right.
	\end{equation*}
	which has the same form as the original system \eqref{eq:trans1} with $0<\tilde{\mathbf{n}}<1$. Therefore, the conclusions of Theorems \ref{thm:lower_bdd} and \ref{thm:upper_bdd} extend to the case $\mathbf{n}>1$ as well.
\end{remark}
\begin{remark}
	The lower bound in Theorem~\ref{thm:lower_bdd} holds uniformly for all $0<\varepsilon<\frac{1}{2}$ and $\delta>0$, while the upper bound in Theorem~\ref{thm:upper_bdd} depends on $\delta$.
	The above theorems demonstrate that surface-localized transmission eigenfunctions occur with positive density, but this localization is not a universal property of all transmission eigenfunctions.
\end{remark}

\begin{remark}
	The present analysis is concerned with the asymptotic regime $n \to 1$, in which the material contrast dominates and leads to surface-localized transmission eigenfunctions with positive spectral density. It is worth emphasizing that, for general geometries, a comprehensive existence theory for such boundary-localized eigenfunctions—let alone sharp density estimates—is not yet available. Extending these results to non-radially symmetric settings therefore constitutes a significant and challenging open direction for future research.
\end{remark}

\section{Proof of main results: 2D case}\label{2Dcase}
In this section, we analyze the transmission eigenvalue problem \eqref{eq:trans1} for the unit ball $\Omega \subset \mathbb{R}^2$, assuming a constant refractive index $\mathbf{n} \in (0,1)$.

According to \cite{PS14}, every real positive transmission eigenvalue $k$ must satisfy
$F_m(k) = 0$ for some $m \in \mathbb{N}$, where
\begin{equation*}
	F_{m}(x) := \mathbf{n} J_{m}(x) J_{m}^{\prime}(\mathbf{n} x) - J_{m}(\mathbf{n} x) J_{m}^{\prime}(x).
\end{equation*}
This allows the decomposition:
\begin{equation}\label{eq:E_decomposition}
	E^c\left(R, \varepsilon, \delta \right) = \bigcup_{m}^{\infty} E^c_m\left(R, \varepsilon, \delta \right),
\end{equation}
where
\begin{equation}\label{eq:E_m}
	\begin{split}
		&E^c_m\left(R, \varepsilon, \delta \right)\\& = \left\{ (u,v,k) ; k \text{ is a root of }F_m(x),\, 0<k<R, \,\mathcal{E}_{\delta}(\psi) > 1 - \varepsilon, \, \psi = u \text{ or } v\right\}.
	\end{split}
\end{equation}
Similarly, the set $E^{uc}$ decomposes as:
\begin{equation}\label{eq:E^un_decomposition}
	E^{uc}\left(R, \varepsilon, \delta \right) = \bigcup_{m}^{\infty} E^{uc}_m\left(R, \varepsilon, \delta \right),
\end{equation}
where
\begin{equation}\label{eq:E_m^un}
	\begin{split}
		&E^{uc}_m\left(R, \varepsilon, \delta \right)\\& = \left\{ (u,v,k) ; k \text{ is a root of }F_m(x),\, 0<k<R, \,\mathcal{E}_{\delta}(\psi) \leq 1 - \varepsilon, \, \psi = u \text{ and } v \right\}.
	\end{split}
\end{equation}

\begin{remark}
	Note that for certain $\mathbf{n}$, distinct indices $m_1 \neq m_2$ may yield $F_{m_1}$ and $F_{m_2}$ sharing a common root $k$. However, since each index $m$ corresponds to distinct eigenfunction pairs $(u,v)$, different $m$ values produce different solutions even when sharing the same $k$. Therefore, the decompositions in \eqref{eq:E_decomposition} and \eqref{eq:E^un_decomposition} are disjoint unions.
\end{remark}

Similar to \eqref{eq:count_E}, we define counting functions (accounting for geometric multiplicities) for the sets in  \eqref{eq:E^uc_old},  \eqref{eq:E_m} and \eqref{eq:E_m^un} as:
\begin{equation*}
	\begin{split}
		N^{c}_{m}(R ,\varepsilon, \delta)& := \sharp E_m^c(R, \varepsilon, \delta), \\
		N^{uc}(R,\varepsilon, \delta) &:= \sharp E^{uc}(R, \varepsilon, \delta), \,
		N_{m}^{uc}(R ,\varepsilon, \delta) := \sharp E_m^{uc}(R, \varepsilon, \delta).
	\end{split}
\end{equation*}

\begin{remark}\label{rem:property_N_monotonic}
	By definition, the counting functions
	$N^c(R,\varepsilon, \delta)$, $N^c_{m}(R,\varepsilon, \delta)$, $N^{uc}(R,\\ \varepsilon, \delta)$, and $N_{m}^{uc}(R,\varepsilon, \delta)$
	are all monotone increasing with respect to $R$.
\end{remark}


\subsection{A uniform lower bound estimate in 2D}

We first establish a uniform lower bound for $\rho_{\varepsilon, \delta}$. Beginning with $N(R)$, the decomposition from \cite{PS14} yields:
\begin{equation}\label{eq:N_R_decomposition}
	N(R) = \sum_{m=0}^{\infty} N_{m}(R),
\end{equation}
where $N_{m}(R)$ counts roots of $F_{m}(x)$ in $(0,R)$:
\begin{equation}\label{N_m}
	N_{m}(R) :=\sharp\{(u,v,k): \text{$k$ is a root of $F_{m}(x)$},0 < k < R\}.
\end{equation}

\begin{remark}\label{rem:finite_summation}
	The series \eqref{eq:N_R_decomposition} is well-defined. Indeed, according to \cite{AS72,PS14}, when $m$ is sufficiently large, $N_m(R)$ is equal to zero. Hence, the infinite series in the right hand side of \eqref{eq:N_R_decomposition} only have finite nonzero terms.  Without loss of generality, we assume that the largest $m$ such that $N_m(R) > 0$ is $M(R, \mathbf{n})$.
\end{remark}

In fact, $N_m(R)$ is relative to the counting function of the roots of the Bessel functions $J_m(x)$.
Hence, for the Bessel functions $J_m(x)$, we define the counting functions $N^{B}_{m}(R)$ as follows
\begin{equation}
	N^{B}_{m}(R):=\sharp\{k\in\mathbb{R}: \text{$k$ is a root of $J_{m}(x)$},0 < k<R\}.
	\nonumber
\end{equation}

\begin{remark}\label{rem:2D}
	By the definition, $N^B_m(R)$ is monotonically increasing with respect to $R$. Furthermore, the interlacing property of zeros of Bessel functions implies that $N^B_m(R)$ decreases monotonically with respect to $m$ (\cite{AS72}). Additionally, the asymptotic expansion for zeros of $J_0$ from \cite[Section 7.6.5]{OL74}:
	\begin{equation*}
		j_{0,k} = k\pi - \frac{\pi}{4} + \mathcal{O}\left(k^{-1}\right)
	\end{equation*}
	yields the counting function estimate:
	\begin{equation*}
		N^B_0(R) = \frac{R}{\pi} + \mathcal{O}(1).
	\end{equation*}
\end{remark}

For the quantity $N_m(R)$, we recall from \cite[pp.~807]{PS14} the formulation
\begin{equation}\label{eq:N_m_decomposition}
	N_m(R) =
	\begin{cases}
		N^B_0(R) - N^B_0(\mathbf{n} R) + \gamma_0, & m = 0, \\
		2 \big( N^B_m(R) - N^B_m(\mathbf{n} R) + \gamma_m \big), & m \geq 1,
	\end{cases}
\end{equation}
where $\gamma_{m}$ is an error term taking values in $\{0,1\}$ depending on whether
$R$ is exactly a zero of the Bessel function. Note that $\gamma_m=0$ whenever $N_m(R)=0$.

Moreover, the first zero $j_{m,1}$ of $J_m$ satisfies $j_{m,1} > m$ (\cite{AS72}). Hence,
$N_{\lceil R \rceil}^B(R) = 0$, where $\lceil t\rceil$ denotes the smallest integer greater than $t$. Consequently, $N_{\lceil R \rceil}(R) = 0$ and
\begin{equation}\label{eq:M_upper_bound}
	M(R, \mathbf{n}) < \lceil R \rceil.
\end{equation}
This upper bound is independent of the refractive index $\mathbf{n}$ and dimension.

From \eqref{eq:N_m_decomposition} and \eqref{eq:M_upper_bound}, one deduces that
\begin{equation}\label{eq:PS_error_term}
	2\sum^{\infty}\limits_{m=1}\gamma_{m}+\gamma_{0}=\mathcal{O}(R).
\end{equation}
Furthermore, Weyl's law \cite[Th. 1.6.1]{SV97} yields
\begin{equation*}
	N^{B}_{0}(R) + 2\sum_{m=1}^{\infty} N^{B}_{m}(R) = (2\pi)^{-2} \omega^{2}_{2} R^2 + \mathcal{O}(R),
\end{equation*}
where $\omega_2$ is the volume of the unit ball in $\mathbb{R}^2$. Since the series $\sum^{\infty}\limits_{m=0}N_{m}(R)$ has finitely many nonzero terms, we may reorder summations to obtain:
\begin{equation*}
	\begin{aligned}
		N(R) &= \sum^{\infty}\limits_{m=0}N_{m}(R) + \left( 2\sum^{\infty}\limits_{m=1}\gamma_{m}+\gamma_{0}\right) \\
		&= N^{B}_{0}(R)-N^{B}_{0}(\mathbf{n}R) + 2\sum^{\infty}\limits_{m=1}
		\left(N^{B}_{m}(R)-N^{B}_{m}(\mathbf{n}R)\right)+\mathcal{O}(R)\\
		&= \left(N^{B}_{0}(R)+2\sum^{\infty}\limits_{m=1}N^{B}_{m}(R)\right) - \left(N^{B}_{0}(\mathbf{n}R)+2\sum^{\infty}\limits_{m=1}N^{B}_{m}(\mathbf{n}R)\right)+\mathcal{O}(R)\\
		&=(2\pi)^{-2}\omega^{2}_{2}R^2+\mathcal{O}(R)- (2\pi)^{-2}\omega^{2}_{2}(\mathbf{n}R)^2-\mathcal{O}(\mathbf{n}R)+\mathcal{O}(R)\\
		&=(1-\mathbf{n}^2)(2\pi)^{-2}\omega^{2}_{2}R^2+ \mathcal{O}(R).
	\end{aligned}
\end{equation*}

Next, we estimate $N^{c}(R, \varepsilon, \delta)$. The decomposition in \eqref{eq:E_decomposition} yields:
\begin{equation}\label{eq:N_C_R_decomposition}
	N^{c}(R, \varepsilon, \delta) = \sum^{\infty}\limits_{m=0}N_{m}^{c}(R ,\varepsilon, \delta).
\end{equation}
Note that
\begin{equation*}
	N_{m}^{c}(R, \varepsilon, \delta) \leq N_{m}(R).
\end{equation*}
Hence, it follows from Remark \ref{rem:finite_summation} that the series in \eqref{eq:N_C_R_decomposition} is a finite sum and thus well-defined.

Definition \eqref{eq:E_m} reveals that $N^{c}(R,\varepsilon,\delta)$ depends critically on $\mathcal{E}_{\delta}(\cdot)$. We therefore develop estimates for this term.

Owing to the assumption that the domain is the unit ball in $\mathbb{R}^2$, the corresponding eigenfunctions $u$ and $v$ admit the following expression
\begin{equation*}
	u = C_1 J_m(k\mathbf{n}r)\cdot e^{im\theta}, v = C_2 J_m(r)\cdot e^{im\theta},
\end{equation*}
which implies the energy concentration expression:
\begin{equation*}
	\mathcal{E}^2_{\delta}(u) = \frac{\int_{1-\delta}^1 r J_{m}^2\left(k\mathbf{n} r\right) \mathrm{d} r}{\int_{0}^1 r J_{m}^2\left(k\mathbf{n} r\right) \mathrm{d} r}.
\end{equation*}
A straight calculation gives
\begin{equation*}
	\frac{\int_{1-\delta}^1 r J_{m}^2\left(k\mathbf{n} r\right) \mathrm{d} r}{\int_{0}^1 r J_{m}^2\left(k\mathbf{n} r\right) \mathrm{d} r} = 1-\frac{\int_{0}^{1-\delta} r J_{m}^2\left(k\mathbf{n} r\right) \mathrm{d} r}{\int_{0}^1 r J_{m}^2\left(k\mathbf{n} r\right) \mathrm{d} r}.
\end{equation*}
Applying the substitution $\tilde{r} = k\mathbf{n} r$, we obtain:
\begin{equation*}
	\int_{0}^{1} r J_{m}^2\left(k\mathbf{n} r\right) \mathrm{d} r = \frac{1}{k^2\mathbf{n}^2} \int_{0}^{k\mathbf{n}} \tilde{r} J_{m}^2\left(\tilde{r}\right) \mathrm{d} \tilde{r}.
\end{equation*}
Thus the ratio simplifies to:
\begin{equation*}
	\frac{\int_{0}^{1-\delta} r J_{m}^2\left(k\mathbf{n} r\right) \mathrm{d} r}{\int_{0}^1 r J_{m}^2\left(k\mathbf{n} r\right) \mathrm{d} r} = \frac{\int_{0}^{k\mathbf{n}\tau} \tilde{r} J_{m}^2\left(\tilde{r}\right) \mathrm{d} \tilde{r}}{\int_{0}^{k\mathbf{n}} \tilde{r} J_{m}^2\left(\tilde{r}\right) \mathrm{d} \tilde{r}},
\end{equation*}
where  $\tau = 1-\delta$.

Now, we shall establish sufficient conditions for
\begin{equation}\label{eq:key_estimate_2D}
	QI:=\frac{\int_{0}^{k\mathbf{n}\tau} \tilde{r} J_{m}^2(\tilde{r})  \mathrm{d}\tilde{r}}{\int_{0}^{k\mathbf{n}} \tilde{r} J_{m}^2(\tilde{r})  \mathrm{d}\tilde{r}} <  2\varepsilon - \varepsilon^2.
\end{equation}
This requires estimates for the Bessel function $J_m(x)$, which we develop below.
\begin{lemma}\label{lem:estimate_Jm}
	For $0 < \mathbf{n} < 1$, $0 < k < \frac{m}{\mathbf{n}}$, and $0 < \tau < 1$,  we have
	\begin{equation*}
		\frac{J_m(k \mathbf{n} \tau)}{J_m(k \mathbf{n})} \leq \tau^m \exp\left(\frac{m}{2}\left(1-\tau^2\right)\right).
	\end{equation*}
\end{lemma}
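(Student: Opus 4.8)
The plan is to pass to logarithms and reduce everything to a single pointwise bound on the ratio $J_{m+1}/J_m$. First observe that the case $m=0$ is vacuous, since the hypothesis $0<k<m/\mathbf{n}$ forces $m\geq 1$; so assume $m\geq 1$ and write $x:=k\mathbf{n}\in(0,m)$. Because the first positive zero of $J_m$ satisfies $j_{m,1}>m$ (and likewise $j_{m+1,1}>m+1>m$), both $J_m$ and $J_{m+1}$ are strictly positive on $(0,m]$, so the ratio $R(t):=J_{m+1}(t)/J_m(t)$ is smooth and positive there. Using the recurrence $J_m'(t)=\frac{m}{t}J_m(t)-J_{m+1}(t)$ one obtains the clean logarithmic-derivative identity
\begin{equation*}
	\frac{d}{dt}\log\frac{J_m(t)}{t^m}=\frac{J_m'(t)}{J_m(t)}-\frac{m}{t}=-\frac{J_{m+1}(t)}{J_m(t)}.
\end{equation*}
Integrating from $\tau x$ to $x$ and separating the factor $t^m$ yields the exact representation
\begin{equation*}
	\log\frac{J_m(\tau x)}{J_m(x)}=m\log\tau+\int_{\tau x}^{x}\frac{J_{m+1}(t)}{J_m(t)}\,dt,
\end{equation*}
so the lemma reduces to controlling this integral.

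The key step is the pointwise estimate $\frac{J_{m+1}(t)}{J_m(t)}\leq\frac{t}{m}$ for $0<t<m$, which I would establish through the Riccati equation satisfied by $R$. Combining the recurrences $J_m'=\frac{m}{t}J_m-J_{m+1}$ and $J_{m+1}'=J_m-\frac{m+1}{t}J_{m+1}$ gives
\begin{equation*}
	R'(t)=1-\frac{2m+1}{t}R(t)+R(t)^2.
\end{equation*}
Comparing $R$ with the test function $S(t):=t/m$, a short computation shows that \emph{at any point where} $R(t)=S(t)$ one has $R'(t)=t^2/m^2-1-1/m<1/m=S'(t)$ for $t<m$. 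Since $R(t)\sim t/(2(m+1))<S(t)$ as $t\to0^+$, a first-crossing argument rules out $R$ ever overtaking $S$ on $(0,m)$: at a hypothetical first crossing one would be forced to have $R'\geq S'$, contradicting the strict inequality just derived. Hence $R\leq S$ throughout $(0,m)$.

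With the pointwise bound in hand the conclusion is immediate, since
\begin{equation*}
	\int_{\tau x}^{x}\frac{J_{m+1}(t)}{J_m(t)}\,dt\leq\int_{\tau x}^{x}\frac{t}{m}\,dt=\frac{x^2(1-\tau^2)}{2m}\leq\frac{m(1-\tau^2)}{2},
\end{equation*}
where the last inequality uses $x=k\mathbf{n}<m$. Substituting into the integral representation and exponentiating produces exactly $J_m(k\mathbf{n}\tau)/J_m(k\mathbf{n})\leq\tau^m\exp\!\big(\tfrac{m}{2}(1-\tau^2)\big)$. I expect the Riccati comparison to be the main obstacle: one must check that $R$ remains smooth (no sign change of $J_m$) on the whole interval, and the differential-inequality argument must be phrased so that the strict inequality $R'<S'$ at equality points genuinely precludes a crossing. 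Everything else — the recurrences, the logarithmic-derivative identity, and the final integration together with the bound $k\mathbf{n}<m$ — is routine.
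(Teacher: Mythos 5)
Your proof is correct, and it is worth comparing with the paper's. Both arguments share the same skeleton: bound the logarithmic derivative of $t^{-m}J_m(t)$ by a linear function of $t$ on the interval of interest, integrate from $k\mathbf{n}\tau$ to $k\mathbf{n}$, and finish with $k\mathbf{n}<m$. Indeed, your identity $\frac{d}{dt}\log\bigl(t^{-m}J_m(t)\bigr)=-J_{m+1}(t)/J_m(t)$ is exactly the quantity the paper controls, since it works with $\mathcal{J}_m(t)=t^{-m}J_m(t)$ and $\mathcal{J}_m'/\mathcal{J}_m=-J_{m+1}/J_m$. The difference lies in how the pointwise bound is obtained. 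The paper simply cites Krasikov's inequality, which gives $J_{m+1}(t)/J_m(t)\leqslant \frac{2t}{2m+1}$ for $0<t\leqslant m+\frac12$, leading to the exponent $\frac{k^2\mathbf{n}^2(1-\tau^2)}{2m+1}$ before the final crude estimate. You instead prove your own bound $J_{m+1}(t)/J_m(t)\leqslant t/m$ on $(0,m)$ from scratch, via the Riccati equation $R'=1-\frac{2m+1}{t}R+R^2$ and a first-crossing comparison with $S(t)=t/m$; the crossing argument is airtight (at a first crossing one needs $(R-S)'\geqslant 0$, while your computation gives $R'-S'=\frac{t^2}{m^2}-1-\frac{2}{m}<0$ there, and $R<S$ near $0$ by the series expansion). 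Your bound is slightly weaker than Krasikov's ($t/m$ versus $\frac{2t}{2m+1}$), but both collapse to the same final exponent $\frac{m}{2}(1-\tau^2)$ after using $k\mathbf{n}<m$, so nothing is lost. What your route buys is self-containedness: the lemma no longer rests on an external asymptotic result, only on the standard recurrences and $j_{m,1}>m$ (which the paper already invokes elsewhere). What the paper's route buys is brevity and a sharper intermediate constant, which could matter if one wanted to track the dependence on $m$ more finely.
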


\begin{proof}
	Let $\mathcal{J}_m(x)=x^{-m} J_m(x)$. By \cite{KRA14}, for $0<x \leq m+\frac{1}{2}$, one has
	$$
	\frac{\mathcal{J}_m'(x)}{\mathcal{J}_m(x)} \geq \frac{\sqrt{(2 m+1)^2-4 x^2}-2 m-1}{2 x} \geq-\frac{2 x}{2 m+1}.
	$$
	Then,
	$$
	\ln \frac{\mathcal{J}_m(k \mathbf{n})}{\mathcal{J}_m(k \mathbf{n} \tau)} \geq-\int_{k \mathbf{n} \tau}^{k \mathbf{n}} \frac{2 x}{2 m+1} d x=\frac{k^2 \mathbf{n}^2(\tau^2-1)}{2 m+1},
	$$
	which implies
	$$
	\frac{\mathcal{J}_m(k \mathbf{n})}{\mathcal{J}_m(k \mathbf{n} \tau)} \geq \exp\left(\frac{k^2 \mathbf{n}^2(\tau^2-1)}{2 m+1}\right).
	$$
	Recalling that $\mathcal{J}_m(x)=x^{-m} J_m(x)$, we have
	\begin{equation}\label{eq:J_mz}
		\begin{split}
			\frac{J_m(k \mathbf{n}\tau)}{J_m(k \mathbf{n})}& \leq \tau^m \exp\left(\frac{k^2 \mathbf{n}^2(1 - \tau^2)}{2 m+1}\right)\\& \leq \tau^m \exp\left(\frac{m^2(1-\tau^2)}{2m}\right) \leq \tau^m \exp\left(\frac{m}{2}\left(1-\tau^2\right)\right).
		\end{split}
	\end{equation}
\end{proof}

By Lemma \ref{lem:estimate_Jm}, we derive the following estimate for $QI$.
\begin{lemma}\label{lem:estimate_Jm_integral}
	For $0 < \mathbf{n} < 1$, $0 < k  < \frac{m}{\mathbf{n}}$, and $0 < \tau < 1$,
	we have
	\begin{equation}\label{eq:I}
		QI<2(m+1) \tau^{2+2m} \exp\left(m\left(1-\tau^2\right)\right).
	\end{equation}
\end{lemma}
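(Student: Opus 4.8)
The plan is to bound the numerator and denominator of $QI$ in \eqref{eq:key_estimate_2D} separately and then glue them together via Lemma \ref{lem:estimate_Jm}. Throughout I write $a := k\mathbf{n}$, so the hypothesis $0<k<m/\mathbf{n}$ reads $a<m$ (the statement is non-vacuous only for $m\geq 1$). The two structural facts I would nail down first are monotonicity properties on $[0,a]$: (i) $J_m$ is positive and \emph{increasing} there, and (ii) the normalized function $\mathcal{J}_m(x)=x^{-m}J_m(x)$ already used in the proof of Lemma \ref{lem:estimate_Jm} is positive and \emph{decreasing} there. Fact (i) holds because the first positive critical point $j_{m,1}'$ of $J_m$ exceeds $m$: evaluating the Bessel equation $x^2J_m''+xJ_m'+(x^2-m^2)J_m=0$ at a critical point $x_0<m$ forces $J_m''(x_0)$ to share the sign of $J_m(x_0)$, which is incompatible with $x_0$ being a first maximum; hence $J_m'>0$ on $(0,m)\supset(0,a]$. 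Fact (ii) follows from the identity $\mathcal{J}_m'(x)=-x^{-m}J_{m+1}(x)$, which is negative on $(0,j_{m+1,1})$, together with $j_{m+1,1}>m+1>a$ (see \cite{AS72}).

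With these in hand, I would estimate the numerator by monotonicity of $J_m$,
\[
\int_0^{k\mathbf{n}\tau}\tilde r\,J_m^2(\tilde r)\,d\tilde r \le J_m^2(k\mathbf{n}\tau)\int_0^{k\mathbf{n}\tau}\tilde r\,d\tilde r=\tfrac12 (k\mathbf{n}\tau)^2\, J_m^2(k\mathbf{n}\tau),
\]
and the denominator by writing $J_m^2(\tilde r)=\tilde r^{2m}\mathcal{J}_m^2(\tilde r)$ and using that $\mathcal{J}_m$ is decreasing, so $\mathcal{J}_m(\tilde r)\ge \mathcal{J}_m(k\mathbf{n})$ on $[0,k\mathbf{n}]$,
\[
\int_0^{k\mathbf{n}}\tilde r\,J_m^2(\tilde r)\,d\tilde r=\int_0^{k\mathbf{n}}\tilde r^{2m+1}\mathcal{J}_m^2(\tilde r)\,d\tilde r\ge \mathcal{J}_m^2(k\mathbf{n})\,\frac{(k\mathbf{n})^{2m+2}}{2m+2}=\frac{(k\mathbf{n})^2\, J_m^2(k\mathbf{n})}{2(m+1)}.
\]
Dividing the two estimates and invoking the squared form of Lemma \ref{lem:estimate_Jm}, namely $J_m^2(k\mathbf{n}\tau)\le J_m^2(k\mathbf{n})\,\tau^{2m}\exp\!\big(m(1-\tau^2)\big)$, the common factors $J_m^2(k\mathbf{n})$ and $(k\mathbf{n})^2$ cancel and I obtain
\[
QI \le (m+1)\,\tau^{2+2m}\exp\!\big(m(1-\tau^2)\big) < 2(m+1)\,\tau^{2+2m}\exp\!\big(m(1-\tau^2)\big),
\]
which is exactly \eqref{eq:I}.

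The calculation itself is short; the part demanding the most care is pinning down the two monotonicity facts and, crucially, verifying that the hypothesis $a=k\mathbf{n}<m$ places the \emph{entire} integration interval $[0,k\mathbf{n}]$ strictly below the relevant first zeros ($j_{m,1}'$ for $J_m$ and $j_{m+1,1}$ for $\mathcal{J}_m$), so that no sign change or non-monotone behavior intervenes. I would therefore record the inequalities $m<j_{m,1}'$ and $m+1<j_{m+1,1}$ explicitly before using them. A secondary point worth flagging is that the stated constant $2(m+1)$ is not sharp — the argument above actually yields the cleaner constant $(m+1)$ — but the extra slack is harmless and in fact produces the strict inequality required to conclude \eqref{eq:key_estimate_2D}.
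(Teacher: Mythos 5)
Your proof is correct, and it reaches the stated bound — in fact with the sharper constant $(m+1)$ — by a route that differs from the paper's in both halves of the estimate. For the denominator, the paper does not use monotonicity at all: it evaluates $\int_0^{k\mathbf{n}}\tilde r J_m^2(\tilde r)\,\mathrm{d}\tilde r$ exactly via the Lommel-type identity $[x^2\mathcal{W}_m(x)]'=2xJ_m^2(x)$ with $\mathcal{W}_m=J_m^2-J_{m-1}J_{m+1}$, and then applies the Tur\'an-type inequality $\mathcal{W}_m(x)>J_m^2(x)/(m+1)$ from \cite{JB91}; your elementary argument via the decrease of $\mathcal{J}_m(x)=x^{-m}J_m(x)$ on $(0,j_{m+1,1})\supset(0,k\mathbf{n}]$ lands on exactly the same lower bound $(k\mathbf{n})^2J_m^2(k\mathbf{n})/\bigl(2(m+1)\bigr)$, so avoiding \cite{JB91} costs nothing. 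For the numerator, the paper bounds the integrand by its right-endpoint value times the interval length, obtaining $(k\mathbf{n}\tau)^2J_m^2(k\mathbf{n}\tau)$, while you freeze only $J_m^2$ and integrate $\tilde r$ exactly, winning a factor $\tfrac12$; this is precisely why you end with $(m+1)$ where the paper has $2(m+1)$, and the doubling then restores the strict inequality \eqref{eq:I}. Both arguments conclude identically by squaring Lemma \ref{lem:estimate_Jm}. As for what each approach buys: the paper outsources the Bessel analysis to the two citations \cite{AS72} and \cite{JB91} and only tacitly assumes the monotonicity of $xJ_m^2$ on $[0,k\mathbf{n}\tau]$, whereas your proof is self-contained apart from $j_{m,1}>m$ and $j_{m+1,1}>m+1$, at the price of verifying — as you correctly do, via the sign of $J_m''$ at a putative critical point of the Bessel equation and via $\mathcal{J}_m'(x)=-x^{-m}J_{m+1}(x)$ — that the hypothesis $k\mathbf{n}<m$ keeps the entire integration range inside both monotonicity intervals. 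Your remark that the case $m=0$ is vacuous under the hypothesis $0<k<m/\mathbf{n}$ is also a point the paper leaves implicit.
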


\begin{proof}
	Let $\mathcal{W}_m(x) = J_m^2(x) - J_{m-1}(x)J_{m+1}(x)$. It follows from \cite{AS72} that
	\begin{equation*}
		[x^2 \mathcal{W}_m(x)]' = 2xJ_m^2(x).
	\end{equation*}
	Hence, we obtain
	\begin{equation*}
		QI = \frac {2\int_{0}^{k\mathbf{n}\tau} \tilde{r} J_{m}^2\left(\tilde{r}\right) \mathrm{d} \tilde{r}}{ k^2 \mathbf{n}^2\mathcal{W}_m(k\mathbf{n})}.
	\end{equation*}
	Then, by the monotonicity of $x J_{m}^2$ in the interval $[0,k \mathbf{n} \tau]$, one observes
	\begin{equation*}
		\int_{0}^{k\mathbf{n}\tau} \tilde{r} J_{m}^2\left(\tilde{r}\right) \mathrm{d} \tilde{r} \leq k\mathbf{n}\tau \cdot k\mathbf{n}\tau J_m^2(k\mathbf{n}\tau).
	\end{equation*}
	Thanks to \cite{JB91}, one has
	\begin{equation*}
		\mathcal{W}_m(x) > \frac{J_m^2(x)}{m+1}, \quad m>-1, -\infty<x<\infty.
	\end{equation*}
	Thus, we have
	\begin{equation*}
		QI < \frac{2(m+1)\tau^2 J_m^2(k\mathbf{n}\tau)}{ J_{m}^2(k\mathbf{n})},
	\end{equation*}
	together with \eqref{eq:J_mz} implies the estimate (\ref{eq:I}).
\end{proof}

Based on Lemma \ref{lem:estimate_Jm} and Lemma \ref{lem:estimate_Jm_integral}, we give the sufficient conditions for \eqref{eq:key_estimate_2D}.
\begin{theorem}\label{thm:key_2D}
	For $0 < \mathbf{n} < 1$, $0 < k  < \frac{m}{\mathbf{n}}$, $0 < \varepsilon < \frac{1}{2}$, and $0 < \delta < 1$,  there exists a constant $C(\varepsilon, \delta)$ independent of $k,m, \mathbf{n}$ such that for any
	$m>C(\varepsilon, \delta)$, we have
	\begin{equation*}\label{eq:Ivare}
		QI < 2\varepsilon - \varepsilon^2,
	\end{equation*}
	where $\tau=1-\delta$. Moreover, $C(\varepsilon, \delta)$ is a constant independent of dimension.
\end{theorem}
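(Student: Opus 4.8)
The plan is to feed the hypotheses directly into the bound already supplied by Lemma~\ref{lem:estimate_Jm_integral}. The decisive feature of that lemma is that its right-hand side,
\begin{equation*}
	f(m) := 2(m+1)\,\tau^{2+2m}\exp\!\left(m(1-\tau^2)\right),
\end{equation*}
depends only on $m$ and on $\tau = 1-\delta$, and carries no trace of $k$ or $\mathbf{n}$ (their dependence having already been absorbed via $k\mathbf{n}<m$ in the proof of that lemma). Consequently, once I show $f(m)$ eventually drops below $2\varepsilon-\varepsilon^2$, the resulting threshold is automatically free of $k$ and $\mathbf{n}$, which is exactly the uniformity asserted. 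Since $0<\varepsilon<\tfrac12$ forces $2\varepsilon-\varepsilon^2=\varepsilon(2-\varepsilon)>0$, the target is a fixed positive number, so it suffices to prove $f(m)\to 0$ as $m\to\infty$ and read off a threshold.

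To analyze $f$, I would collapse the two $m$-dependent exponential factors by writing $\tau^{2m}=\exp(2m\ln\tau)$, so that
\begin{equation*}
	f(m) = 2\tau^{2}\,(m+1)\exp\!\left(m\,\alpha(\tau)\right),\qquad \alpha(\tau):= 2\ln\tau + 1 - \tau^{2}.
\end{equation*}
The crux is that $\alpha(\tau)<0$ for every $\tau\in(0,1)$. This is elementary: $\alpha(1)=0$, while $\alpha'(\tau)=\tfrac{2}{\tau}-2\tau=\tfrac{2(1-\tau^2)}{\tau}>0$ on $(0,1)$, so $\alpha$ increases strictly to the value $0$ at $\tau=1$ and is therefore strictly negative on $(0,1)$. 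With $\alpha(\tau)<0$ fixed, $f(m)$ is a polynomial in $m$ times a decaying exponential, hence $f(m)\to 0$. Thus there exists $C(\varepsilon,\delta)$ with $f(m)<2\varepsilon-\varepsilon^2$ for all $m>C(\varepsilon,\delta)$, and combining with $QI<f(m)$ from Lemma~\ref{lem:estimate_Jm_integral} gives precisely \eqref{eq:Ivare}. An explicit threshold can be extracted by absorbing the prefactor into half of the exponential decay: setting $\beta:=-\alpha(\tau)>0$ and using the elementary maximum $t\,e^{-\beta t/2}\le 2/(e\beta)$, the inequality $f(m)<\varepsilon(2-\varepsilon)$ is guaranteed once $m>\tfrac{2}{\beta}\ln\!\big(\mathrm{const}(\tau)/(\varepsilon(2-\varepsilon))\big)$, a closed form depending only on $\delta$ and $\varepsilon$.

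For the dimension-independence claim I would observe that $f$ makes no reference to $N$: in three dimensions the analogous energy ratio involves $\int r^{2} j_{\ell}^{2}(k\mathbf{n}r)\,\mathrm{d}r$, and the identity $j_{\ell}(x)=\sqrt{\pi/(2x)}\,J_{\ell+1/2}(x)$ converts the weight $r^2 j_\ell^2$ into $r\,J_{\ell+1/2}^2$, reducing the ratio to the very same quantity $QI$ with the half-integer order $m=\ell+\tfrac12$; the bound $f$ and hence the threshold $C(\varepsilon,\delta)$ are identical in both dimensions. The genuine substance of this theorem lies entirely in the preceding lemmas, so the remaining work is light; the one point demanding care is the sign of $\alpha(\tau)$ — equivalently, verifying that the algebraic decay $\tau^{2m}$ always overwhelms the exponential growth $e^{m(1-\tau^2)}$ — since this is exactly what secures a single threshold valid uniformly in $k$, $\mathbf{n}$, and the dimension.
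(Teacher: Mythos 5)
Your proposal is correct and follows essentially the same route as the paper: both reduce the claim to Lemma~\ref{lem:estimate_Jm_integral} and rest on the same key fact, since your $\alpha(\tau)=2\ln\tau+1-\tau^{2}<0$ is precisely the statement that the paper's quantity $\tau^{2}e^{1-\tau^{2}}$ is less than $1$, established by the same derivative/monotonicity computation. The only cosmetic difference is how the linear prefactor $2(m+1)$ is absorbed to extract an explicit threshold: you use the elementary maximum of $t\,e^{-\beta t/2}$, whereas the paper uses the inequality $x\ln a<a^{x}$ with $a=\frac{1+\tau^{2}e^{1-\tau^{2}}}{2\tau^{2}e^{1-\tau^{2}}}$; both yield a constant $C(\varepsilon,\delta)$ independent of $k$, $m$, $\mathbf{n}$, and the dimension.
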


\begin{proof}
	By Lemma \ref{lem:estimate_Jm_integral}, it suffices to consider
	\begin{equation}\label{eq:m vare}
		2(m+1) \tau^{2+2m}  e^{m(1-\tau^2)} < 2\varepsilon - \varepsilon^2.
	\end{equation}
	Define $f(x) := x^2 e^{1-x^2}$. Direct calculation gives
	\begin{equation*}
		f'(x) = 2x(1 - x^2)e^{1-x^2},
	\end{equation*}
	which is strictly positive for $x \in (0,1)$. Thus $f$ is strictly increasing on $(0,1)$. Since $f(0) = 0$ and $f(1) = 1$, it follows that $0 < f(\tau) < 1$ for $\tau \in (0,1)$.
	Hence, one has
	\begin{equation}\label{eq:f vare}
		2(m+1) \tau^{2+2m} e^{m(1-\tau^2)} < 2(m+1)\left(\tau^2e^{1-\tau^2} \right)^{m+1} = 2(m+1)f^{m+1}(\tau).
	\end{equation}
	
	For any  $0 < \delta < 1$, let $a = \frac{1+f(\tau)}{2f(\tau)}$. Then, it holds that $a > 1$ and $af(\tau) < 1$. Since $x\ln a < a^x$ holds for all $a >1$, one has
	\begin{equation*}
		m+1 < \frac{a^{m+1}}{\ln a},
	\end{equation*}
	which implies
	\begin{equation}\label{eq:f2 vare}
		2(m+1)f^{m+1}(\tau) < \frac{2a^{m+1} f^{m+1}(\tau)}{\ln a}.
	\end{equation}
	
	Take
	\begin{equation*}
		C(\varepsilon, \delta) =  \frac{\ln \left(\left(\varepsilon-\frac{1}{2}\varepsilon^2\right)\ln a\right)}{\ln (a f(\tau))}-1.
	\end{equation*}
	Then, for $m > C(\varepsilon, \delta)$, we have
	\begin{equation*}
		m+1 > \frac{\ln \left(\left(\varepsilon-\frac{1}{2}\varepsilon^2\right)\ln a\right)}{\ln (a f(\tau))}.
	\end{equation*}
	Notice that $af(\tau) < 1$, it holds that $\ln (a f(\tau))< 0 $, which implies
	\begin{equation*}
		(m+1)\ln (a f(\tau)) < \ln \left(\left(\varepsilon-\frac{1}{2}\varepsilon^2\right)\ln a\right).
	\end{equation*}
	Hence, we obtain
	\begin{equation*}
		\frac{2a^{m+1} f^{m+1}(\tau)}{\ln a} < 2\varepsilon - \varepsilon^2.
	\end{equation*}
	This inequality, together with equations \eqref{eq:f vare} and \eqref{eq:f2 vare}, implies the claimed result in equation \eqref{eq:m vare}.
\end{proof}

As a consequence of the above theorem, we obtain the following corollary.
\begin{corollary}\label{cor:Coro1}
	For $0<\mathbf{n}<1$, $0<k<\frac{m}{\mathbf{n}}$, $0<\varepsilon<\frac{1}{2}$, $0<\delta<1$, and $m>C(\varepsilon, \delta)$, we have
	\begin{equation*}
		\mathcal{E}_{\delta}(u)>1-\varepsilon.
	\end{equation*}
\end{corollary}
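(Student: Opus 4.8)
The plan is to recognize that this corollary is an almost immediate repackaging of Theorem~\ref{thm:key_2D}, with the only genuine content being the translation from a bound on the quantity $QI$ to a bound on the energy ratio $\mathcal{E}_\delta(u)$. First I would recall from the computation preceding \eqref{eq:key_estimate_2D} that, after the substitution $\tilde{r}=k\mathbf{n}r$ and writing $\tau=1-\delta$, the squared energy ratio satisfies the exact identity
\begin{equation*}
	\mathcal{E}^2_{\delta}(u) = 1 - \frac{\int_{0}^{k\mathbf{n}\tau} \tilde{r} J_{m}^2(\tilde{r})\,\mathrm{d}\tilde{r}}{\int_{0}^{k\mathbf{n}} \tilde{r} J_{m}^2(\tilde{r})\,\mathrm{d}\tilde{r}} = 1 - QI.
\end{equation*}
This reduces the entire problem to controlling $QI$ from above, which is exactly what Theorem~\ref{thm:key_2D} provides.

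Next I would invoke Theorem~\ref{thm:key_2D}: under the stated hypotheses $0<\mathbf{n}<1$, $0<k<m/\mathbf{n}$, $0<\varepsilon<\tfrac12$, $0<\delta<1$, and $m>C(\varepsilon,\delta)$, we have $QI < 2\varepsilon-\varepsilon^2$. The key algebraic observation is that the bound $2\varepsilon-\varepsilon^2$ was engineered precisely so that
\begin{equation*}
	1-(2\varepsilon-\varepsilon^2) = 1-2\varepsilon+\varepsilon^2 = (1-\varepsilon)^2.
\end{equation*}
Combining this with the identity $\mathcal{E}^2_{\delta}(u)=1-QI$ yields
\begin{equation*}
	\mathcal{E}^2_{\delta}(u) = 1 - QI > 1-(2\varepsilon-\varepsilon^2) = (1-\varepsilon)^2.
\end{equation*}

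Finally, I would take square roots to conclude. Here the only point requiring a moment's care is the sign: since $\varepsilon<\tfrac12<1$ we have $1-\varepsilon>0$, and $\mathcal{E}_\delta(u)\geqslant 0$ by definition, so passing to positive square roots in the strict inequality $\mathcal{E}^2_\delta(u)>(1-\varepsilon)^2$ is valid and preserves strictness, giving $\mathcal{E}_\delta(u)>1-\varepsilon$ as claimed. I do not anticipate any genuine obstacle in this argument: all the analytic difficulty has already been absorbed into Lemmas~\ref{lem:estimate_Jm} and \ref{lem:estimate_Jm_integral} and Theorem~\ref{thm:key_2D}. The sole design choice worth flagging is the particular form of the threshold $2\varepsilon-\varepsilon^2$ in the key theorem, whose purpose is revealed only here—it is exactly the complement of a perfect square, so that the $QI$ estimate converts cleanly into the desired $(1-\varepsilon)$ lower bound on $\mathcal{E}_\delta(u)$ without any loss.
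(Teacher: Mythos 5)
Your proposal is correct and is exactly the argument the paper intends: the paper presents Corollary~\ref{cor:Coro1} as an immediate consequence of Theorem~\ref{thm:key_2D} via the identity $\mathcal{E}^2_{\delta}(u)=1-QI$ and the factorization $1-(2\varepsilon-\varepsilon^2)=(1-\varepsilon)^2$, which is precisely your chain of reasoning. The square-root step you flag (both sides nonnegative, so strictness is preserved) is the only detail the paper leaves implicit, and you handle it correctly.
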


\begin{remark}
	While our analysis focuses on the eigenfunction $u$, the density $\rho_{\varepsilon,\delta}$ involves both $u$ and $v$.
	A detailed explanation of this reduction will be given in the next subsection.
\end{remark}

Now, we are at the stage to show the proof of Theorem \ref{thm:lower_bdd} for $N=2$.
\begin{proof}
	\textbf{Step 1.} Recalling \eqref{eq:N_C_R_decomposition} and Remark \ref{rem:finite_summation}, we have
	\begin{equation}
		N^c\left(R, \varepsilon, \delta \right) = \sum_{m=0}^{M(R, \mathbf{n})}  N_m^c\left(R, \varepsilon, \delta \right).
		\nonumber
	\end{equation}
	From Remark \ref{rem:property_N_monotonic}, $N_m^c\left(R, \varepsilon, \delta \right) $ is monotone increasing with respect to $R$, and one deduces that
	\begin{equation*}
		N^{c}(R, \varepsilon, \delta) = \sum^{\infty}\limits_{m=0}N_{m}^{c}(R ,\varepsilon, \delta)
		\geq \sum^{M(R,\mathbf{n})}\limits_{m = \lceil C(\varepsilon, \delta) \rceil}N^c_{m}\left(\min \left(R, \frac{m}{\mathbf{n}}\right) ,\varepsilon, \delta\right).
	\end{equation*}
	In light of Corollary \ref{cor:Coro1}, for $m > C(\varepsilon, \delta)$,  we have
	\begin{equation*}
		N^c_{m}\left(\min \left(R, \frac{m}{\mathbf{n}}\right) ,\varepsilon, \delta\right) = N_{m}\left(\min \left(R, \frac{m}{\mathbf{n}}\right)\right).
	\end{equation*}
	In the rest of this proof, we use $C(\varepsilon,\delta):=\max\{C(\varepsilon,\delta),1\}$ for simplicity. Thus, by the above estimates and \eqref{eq:N_m_decomposition}, we obtain
	\begin{equation}
		\begin{aligned}
			N^{c}(R, \varepsilon, \delta)
			& \geq \sum^{M(R,\mathbf{n})}\limits_{m = \lceil C(\varepsilon, \delta) \rceil}N_{m}\left(\min \left(R, \frac{m}{\mathbf{n}}\right)\right)\\
			& \geq 2\sum^{M(R,\mathbf{n})}\limits_{m = \lceil C(\varepsilon, \delta) \rceil} \left(N^{B}_{m}\left(\min \left(R, \frac{m}{\mathbf{n}}\right)\right)-N^{B}_{m}\left(\min \left(\mathbf{n}R, m\right)\right) \right)+\mathcal{O}(R).
			\nonumber
		\end{aligned}
	\end{equation}
	Besides, it can be derived from \cite{AS72} that $j_{m,1}>m$, which further implies that $N^{B}_{m}(\min (\mathbf{n}R, m))=0$. Therefore, we have
	\begin{eqnarray}\label{eq:N^c_lower_bound}
		N^{c}(R, \varepsilon, \delta) & \geq & 2\sum^{M(R,\mathbf{n})}\limits_{m = \lceil C(\varepsilon, \delta) \rceil} N^{B}_{m}\left(\min \left(R, \frac{m}{\mathbf{n}}\right)\right)+\mathcal{O}(R) \nonumber\\
		& \geq & 2 \sum^{M(R,\mathbf{n})}\limits_{m = \lceil C(\varepsilon, \delta) \rceil} \min \left(N_m^B(R), N_m^B\left(\frac{m}{\mathbf{n}}\right)\right)+\mathcal{O}(R).
	\end{eqnarray}
	
	\noindent
	\textbf{Step 2.} According to \cite{AS72}, the Bessel functions satisfy the following equation
	\begin{equation*}
		x^2 \frac{\mathrm{d}^2 }{\mathrm{d} x^2} J_m+x \frac{\mathrm{d} }{\mathrm{d} x} J_m+\left(x^2-m^2\right) J_m=0.
	\end{equation*}
	Set $J_m = u_m(x) \cdot x^{-\frac{1}{2}}$, then the equation for $u_m(x)$ becomes:
	\begin{equation*}\label{eq:bessel}
		u_m'' + \left(1 - \frac{m^2 - \frac{1}{4}}{x^2}\right)u_m = 0.
	\end{equation*}
	Let $q_m(x) = 1 - \frac{m^2 - \frac{1}{4}}{x^2}$. Since the number of zeros of $J_m$ coincides with that of $u_m$, it is sufficient to estimate the number of zeros of $u_m$. Furthermore, as the first zero $j_{m,1}$ of $J_m$ satisfies $j_{m,1} > m$, the number of zeros of $u_m$ in the interval $(0,M)$ coincides with that in $(m,M)$. In fact, $q_m(x)$ is a bounded variation function  in the interval $(m,M)$. Hence, it follows from \cite[pp348, Thm5.2]{H02} and \cite{TG12} that
	\begin{equation}\label{eq:H02_1}
		\begin{aligned}
			\left|\pi N^{B}_{m}(M)-\int_{m}^{M} \sqrt{q_m(x)}\mathrm{d}x\right| &\leq \pi + \frac{1}{4}\int_{m}^{M}\frac{\left|\mathrm{d} q_m(x)\right|}{q_m(x)}.
		\end{aligned}
	\end{equation}	
	By direct computation, we have
	\begin{equation}\label{eq:H02_2}
		\frac{1}{4}\int_{m}^{M}\frac{\left|\mathrm{d} q_m(x)\right|}{q_m(x)} = \mathcal{O}(\ln(m)).
	\end{equation}
	On the other hand, one derives that
	\begin{equation}\label{eq:H02_3}
		\left|\int_{m}^{M} \sqrt{q_m(x)} \mathrm{d}x-\int_{m}^{M}\sqrt{1-\frac{m^2}{x^2}}\mathrm{d}x \right|=\left|\int_{m}^{M}\sqrt{1-\frac{m^2-\frac{1}{4}}{x^2}}\mathrm - \sqrt{1-\frac{m^2}{x^2}}\mathrm{d}x\right| \leq \frac{\pi}{4m}.
	\end{equation}
	Note that
	\begin{equation}\label{eq:H02_4}
		\int_{m}^{M}\sqrt{1-\frac{m^2}{x^2}}\mathrm{d}x = \left( \sqrt{M^2 - m^2} - m\arccos\left(\frac{m}{M}\right) \right).
	\end{equation}
	When $1\leq m<M$, it follows from \eqref{eq:H02_1}-\eqref{eq:H02_4} that
	\begin{equation}\label{eq:N_m^B(R)}
		N^{B}_{m}(R)=\frac{1}{\pi}\left( \sqrt{R^2 - m^2} - m\arccos\left(\frac{m}{R}\right) \right) + \mathcal{O}(\ln R),
	\end{equation}
	and
	\begin{equation}\label{eq:N_m^B(n)}
		N_m^B\left(\frac{m}{\mathbf{n}}\right) = \frac{m}{\pi} \left( \frac{\sqrt{1-\mathbf{n}^2}}{\mathbf{n} } -  \arccos(\mathbf{n})\right) + \mathcal{O}(\ln R).
	\end{equation}
	
	It follows from \eqref{eq:N_m^B(R)}, \eqref{eq:N_m^B(n)}, and Euler-Maclaurin formula that
	\begin{equation}\label{eq:EM_1}
		\begin{split}
			&\sum^{ \lceil R \rceil }\limits_{m = \lceil  \mathbf{n}R \rceil +1} \left( \sqrt{R^2 - m^2} - m\arccos\left(\frac{m}{R}\right) \right)\\& = \int_{\mathbf{n}R}^{R} \sqrt{R^2 - x^2} - x\arccos\left(\frac{x}{R}\right) \mathrm{d} x + \mathcal{O}(R).
		\end{split}
	\end{equation}
	A direct computation gives that
	\begin{equation}\label{eq:EM_2}
		\begin{split}
			&\int \sqrt{R^2 - x^2} - x\arccos\left(\frac{x}{R}\right) \mathrm{d}x\\&= \frac{3}{4} x \sqrt{R^2 - x^2} + \frac{1}{4} R^2 \arcsin\left( \frac{x}{R} \right) - \frac{1}{2} x^2 \arccos\left( \frac{x}{R} \right),
		\end{split}
	\end{equation}
	and
	\begin{equation}\label{eq:EM_3}
		\int_{\mathbf{n}R}^{R} \sqrt{R^2 - x^2} - x\arccos\left(\frac{x}{R}\right) \mathrm{d}x= \left(\left(\frac{1}{4}+\frac{1}{2}\mathbf{n}^2\right)\arccos(\mathbf{n}) - \frac{3}{4}\mathbf{n}\sqrt{1-\mathbf{n}^2}\right)R^2.
	\end{equation}
	
	\noindent
	\textbf{Step 3.} Recalling \eqref{eq:M_upper_bound} and \eqref{eq:N^c_lower_bound}, for $R \gg C(\varepsilon, \delta)$, one has
	\begin{equation}
		\begin{aligned}
			N^{c}(R, \varepsilon, \delta)
			& \geq 2 \sum^{ \lceil R \rceil }\limits_{m = \lceil C(\varepsilon, \delta) \rceil} \min \left(N_m^B(R), N_m^B\left(\frac{m}{\mathbf{n}}\right)\right)+\mathcal{O}(R) \\
			& \geq 2 \sum^{ \lceil \mathbf{n}R \rceil }\limits_{m = \lceil C(\varepsilon, \delta) \rceil} N_m^B\left(\frac{m}{\mathbf{n}}\right) +  2 \sum^{ \lceil R \rceil }\limits_{m = \lceil  \mathbf{n}R \rceil +1} N_m^B(R)+\mathcal{O}(R).
		\end{aligned}
		\nonumber
	\end{equation}
	Notice that $N^B_m(R)$ is monotonically decreasing with respect to $m$ and is monotonically increasing with respect to $R$, we further have
	\begin{equation}\label{eq:s3 Nc1}
		\begin{aligned}
			N^{c}(R, \varepsilon, \delta)
			& \geq 2 \sum^{ \lceil \mathbf{n}R \rceil }\limits_{m = 1} N_m^B\left(\frac{m}{\mathbf{n}}\right)  - 2 \sum^{ \lceil C(\varepsilon, \delta) \rceil  - 1}\limits_{m = 1} N_m^B\left(\frac{m}{\mathbf{n}}\right) +  2 \sum^{ \lceil R \rceil }\limits_{m = \lceil  \mathbf{n}R \rceil + 1} N_m^B(R) + \mathcal{O}(R)  \\
			& \geq 2 \sum^{ \lceil \mathbf{n}R \rceil }\limits_{m = 1} N_m^B\left(\frac{m}{\mathbf{n}}\right)  - 2 \sum^{ \lceil C(\varepsilon, \delta) \rceil -1}\limits_{m = 1} N_0^B(R) +  2 \sum^{ \lceil R \rceil }\limits_{m = \lceil  \mathbf{n}R \rceil + 1} N_m^B(R) + \mathcal{O}(R) \\
			& \geq 2 \sum^{ \lceil \mathbf{n}R \rceil }\limits_{m = 1} N_m^B\left(\frac{m}{\mathbf{n}}\right)  +  2 \sum^{ \lceil R \rceil }\limits_{m = \lceil  \mathbf{n}R \rceil +1} N_m^B(R) - 2 (\lceil C(\varepsilon, \delta) \rceil -1)  \frac{R}{\pi} +\mathcal{O}(R).
		\end{aligned}
	\end{equation}
	Then, by \eqref{eq:EM_1}-\eqref{eq:s3 Nc1}, one deduces that
	\begin{eqnarray*}
		N^{c}(R, \varepsilon, \delta)
		& \geq & 2 \sum^{ \lceil \mathbf{n}R \rceil }\limits_{m = 1} \frac{m}{\pi} \left( \frac{\sqrt{1-\mathbf{n}^2}}{\mathbf{n} } -  \arccos(\mathbf{n})\right) \nonumber \\
		& &+ 2 \sum^{ \lceil R \rceil }\limits_{m = \lceil  \mathbf{n}R \rceil +1} \frac{1}{\pi}\left( \sqrt{R^2 - m^2} - m\arccos\left(\frac{m}{R}\right) \right)+\mathcal{O}(R\ln R) \nonumber \\
		&\geq & \frac{(\mathbf{n}R)^2}{\pi} \left( \frac{\sqrt{1-\mathbf{n}^2}}{\mathbf{n} } -  \arccos(\mathbf{n})\right) + \mathcal{O}(R\ln R) \nonumber\\
		&& + \frac{2R^2}{\pi}\left( \frac{1}{2} \mathbf{n}^2 \arccos(\mathbf{n}) + \frac{1}{4}\arccos(\mathbf{n}) - \frac{3}{4} \mathbf{n} \sqrt{1 - \mathbf{n}^2}\right) + \mathcal{O}(R\ln R)\\
		&= & \frac{R^2}{2\pi}\left(\arccos(\mathbf{n}) - \mathbf{n} \sqrt{1 - \mathbf{n}^2}\right) + \mathcal{O}(R\ln R),
	\end{eqnarray*}
	together with \eqref{eq:N_R_decomposition}, one has
	\begin{equation*}
		\begin{aligned}
			\frac{N^{c}(R,\varepsilon, \delta)}{N(R)} &\geq \frac{\frac{R^2}{2\pi}\left( \arccos(\mathbf{n}) - \mathbf{n} \sqrt{1 - \mathbf{n}^2}\right) + \mathcal{O}(R\ln R)}{\frac{1}{4}(1-\mathbf{n}^2)R^2 + \mathcal{O}(R)}\\
			& \geq \frac{2\left(\arccos(\mathbf{n}) - \mathbf{n} \sqrt{1 - \mathbf{n}^2}\right) }{\pi (1-\mathbf{n}^2)} +\mathcal{O}\left(\frac{\ln R}{R}\right).
		\end{aligned}
	\end{equation*}
	This completes the proof of Theorem \ref{thm:lower_bdd}.
\end{proof}

The first estimate in \eqref{eq:lower_bdd} provides a lower bound for the density in $\mathbb{R}^2$, denoted by $B_{L}^{(2)}(\mathbf{n})$.
As shown in Figure \ref{fig:lower bound 2D}, $B_{L}^{(2)}(\mathbf{n})$ satisfies $B_{L}^{(2)}(0) = 1$ and $B_{L}^{(2)}(1) = 0$, and is monotonically decreasing with respect to $\mathbf{n}$. This implies that surface-localized eigenfunctions become increasingly prevalent as $\mathbf{n}\to 0$, while surface localization becomes less probable as $\mathbf{n} \to 1$.
\begin{figure}[h]
	\centering
	\includegraphics[width=0.6\linewidth]{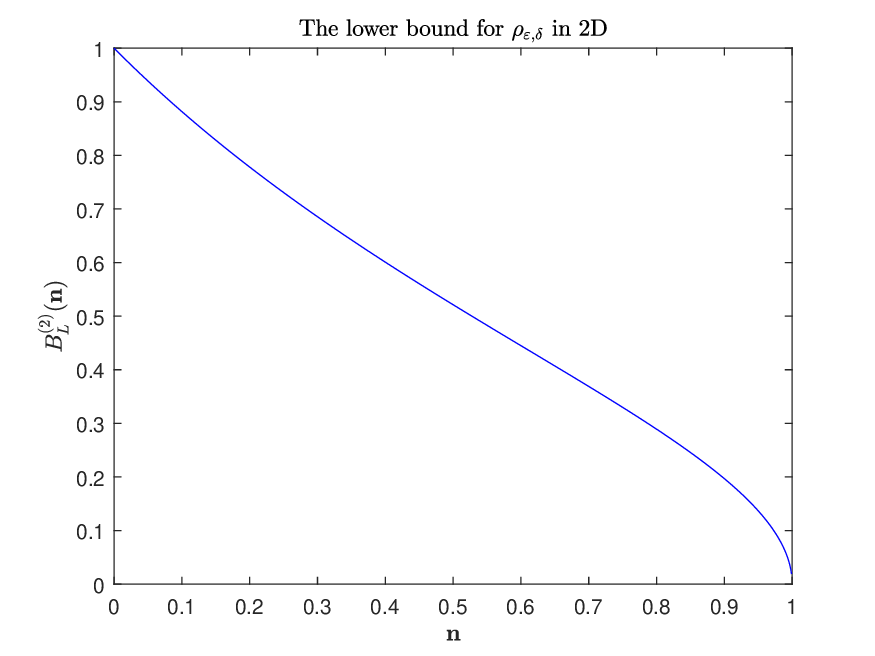}
	\caption{The graph of the lower bound for $\rho_{\varepsilon, \delta}$ in 2D: $B_{L}^{(2)}(\mathbf{n})$.}
	\label{fig:lower bound 2D}
\end{figure}


\subsection{Further comments on the lower bound estimate}
In the previous subsection, our proof of the lower bound relied solely on the transmission eigenfunction $u$. However, the density $\rho_{\varepsilon,\delta}$ is defined using both transmission eigenfunctions $\{u,v\}$. We now address this distinction in detail.

Recall from Corollary \ref{cor:Coro1} that transmission eigenvalues $k$ lie in the interval $I := \bigl(0, \frac{m}{\mathbf{n}}\bigr)$. For our analysis, we partition $I$ into three subintervals:
\begin{equation*}
	I_1 := (0,m), \quad I_2 := (m, (1 + \mu)m), \quad I_3 := \bigl((1 + \mu)m, \frac{m}{\mathbf{n}}\bigr),
\end{equation*}
where $\mu > 0$ is a constant to be specified later. We restrict our analysis to intervals $I_1$ and $I_3$. We note that for interval $I_2$, no estimate for $\mathcal{E}_{\delta}(v)$, which remains an open problem for future investigation.


\subsubsection{Case 1. $k \in I_1$.} Under the conditions in  Corollary \ref{cor:Coro1} and $0 < k < m$, we establish the inequality
\begin{equation}\label{eq:case1}
	\mathcal{E}_{\delta}(u) \geq \mathcal{E}_{\delta}(v),
\end{equation}
which means $u$ is more easily to be surface-localized than $v$.

To begin with,  we introduce the following integral formulas by the recurrence relation of the Bessel function (see formula (9.1.27) in \cite{AS72}).
\begin{lemma}\label{lem:integral_formulas}
	For any $m\in\mathbb{N}$, $ 0<\tau<1$, and $k>0$, one has
	\begin{equation*}
		\int_0^\tau\left(k^2 J_m^{' 2}(k r) r+m^2 \frac{J_m^2(k r)}{r}\right) \mathrm{d} r=\int_0^\tau k^2 J_{m-1}^2(k r) r \mathrm{~d} r-m J_m^2(k \tau),
	\end{equation*}
	and for any $\tau>0$, it holds that
	\begin{equation*}
		\int_0^\tau r J_m^2(r) \mathrm{d} r=\frac{1}{2} \tau^2 J_m^{' 2}(\tau)+\frac{1}{2}\left(\tau^2-m^2\right) J_m^2(\tau)  .
	\end{equation*}
\end{lemma}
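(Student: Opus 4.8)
The plan is to derive both identities directly from the three-term recurrence relation
\[
	J_m'(x) = J_{m-1}(x) - \frac{m}{x} J_m(x),
\]
which is the cited formula (9.1.27) in \cite{AS72}, supplemented by the Bessel differential equation, and then to integrate exact derivatives rather than attempt to evaluate the oscillatory integrands term by term.

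For the first identity I would set $x = kr$ in the recurrence, square both sides, and multiply through by $k^2 r$, obtaining
\[
	k^2 r J_m'^2(kr) = k^2 r J_{m-1}^2(kr) - 2mk J_{m-1}(kr) J_m(kr) + \frac{m^2}{r} J_m^2(kr).
\]
Adding $m^2 J_m^2(kr)/r$ to both sides makes the left-hand side coincide exactly with the claimed integrand, while on the right-hand side the two remaining ``cross'' contributions assemble into a perfect derivative,
\[
	-2mk J_{m-1}(kr) J_m(kr) + \frac{2m^2}{r} J_m^2(kr) = -m \frac{d}{dr}\!\left[J_m^2(kr)\right],
\]
where the last equality again uses the recurrence to rewrite $\tfrac{d}{dr} J_m^2(kr) = 2k J_m(kr) J_m'(kr) = 2k J_{m-1}(kr) J_m(kr) - \tfrac{2m}{r} J_m^2(kr)$. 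Integrating from $0$ to $\tau$ and invoking $J_m(0) = 0$ for $m \geq 1$ produces the boundary term $-m J_m^2(k\tau)$ and completes the formula; the degenerate case $m = 0$ is immediate since $J_0' = -J_1$ forces both sides to agree.

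For the second identity I would verify it by differentiation. Denoting the right-hand side by $F(\tau)$, a direct computation gives
\[
	F'(\tau) = \tau J_m'^2(\tau) + \tau^2 J_m'(\tau) J_m''(\tau) + \tau J_m^2(\tau) + (\tau^2 - m^2) J_m(\tau) J_m'(\tau).
\]
Substituting the Bessel equation $\tau^2 J_m'' = -\tau J_m' - (\tau^2 - m^2) J_m$ to eliminate $J_m''$, the terms proportional to $J_m'^2$ and to $J_m J_m'$ cancel pairwise, leaving $F'(\tau) = \tau J_m^2(\tau)$. Since $F(0) = 0$ for every $m \geq 0$, the fundamental theorem of calculus yields the stated formula.

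The computations are elementary, so the only genuine subtlety is structural bookkeeping: recognizing that the cross term in the first identity collapses to the exact derivative $-m \frac{d}{dr}[J_m^2(kr)]$ instead of trying to integrate the product $J_{m-1} J_m$ directly, and confirming that the singular factor $m^2 J_m^2(kr)/r$ is integrable at the origin, which follows from $J_m(kr) = O(r^m)$ as $r \to 0$. I expect these to be the points demanding the most care, though neither constitutes a real obstacle.
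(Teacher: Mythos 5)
Your proof is correct and follows precisely the route the paper indicates: the paper states this lemma without any written proof, merely citing the recurrence relation (9.1.27) of \cite{AS72}, and your derivation---squaring that recurrence, recognizing the cross term as the exact derivative $-m\frac{d}{dr}\left[J_m^2(kr)\right]$, and verifying the second identity by differentiating the right-hand side and eliminating $J_m''$ via the Bessel equation---supplies exactly the details the paper leaves to the reader. Both computations check out, including the integrability of $m^2 J_m^2(kr)/r$ at the origin, the vanishing of the boundary term there, and the degenerate case $m=0$.
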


In $\mathbb{R}^2$, following \cite{PS14}, the transmission eigenfunctions take the form
\begin{equation*}
	u = C_1 J_m(k\mathbf{n} r) \cdot e^{im\theta},\quad
	v = C_2 J_m(k r) \cdot e^{im\theta}, \quad m\in \mathbb{N}_+.
\end{equation*}
Consequently,
\begin{equation}\label{eq:u and v}
	\mathcal{E}^2_{\delta}(u) = 1 -  	\frac{\int_{0}^{1-\delta} r J_{m}^2\left(k\mathbf{n} r\right) \mathrm{d} r}{\int_{0}^1 r J_{m}^2\left(k\mathbf{n} r\right) \mathrm{d} r},\quad
	\mathcal{E}^2_{\delta}(v) = 1 -  	\frac{\int_{0}^{1-\delta} r J_{m}^2\left(k r\right) \mathrm{d} r}{\int_{0}^1 r J_{m}^2\left(k r\right) \mathrm{d} r}.
\end{equation}
Defining the auxiliary function
\begin{equation}
	\phi(\kappa) :=  \frac{\int_{0}^{1-\delta} r J_{m}^2\left(\kappa r\right) \mathrm{d} r}{\int_{0}^1 r J_{m}^2\left(\kappa r\right) \mathrm{d} r},
	\nonumber
\end{equation}
we obtain the following lemma.
\begin{lemma}\label{lem:monotonic}
	For any given $0<\delta<1$ and $m\in \mathbb{N}_+$, it holds that $\phi(\kappa)$ is monotonically increasing for $\kappa \in (0, j_{m,1})$.
\end{lemma}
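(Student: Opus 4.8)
The plan is to reduce the claim to a clean monotonicity statement about a single auxiliary function, and then to make that monotonicity manifest through the Hadamard product factorization of $J_m$.

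First I would strip off the scaling: substituting $\tilde r = \kappa r$ in both integrals gives $\phi(\kappa) = H(\kappa(1-\delta))/H(\kappa)$, where $H(t) := \int_0^t s J_m^2(s)\,\mathrm{d}s$. Writing $\tau = 1-\delta \in (0,1)$ and differentiating the logarithm yields
\begin{equation*}
	(\log \phi)'(\kappa) = \frac{1}{\kappa}\bigl(\eta(\kappa\tau) - \eta(\kappa)\bigr), \qquad \eta(t) := \frac{t\,H'(t)}{H(t)} = \frac{t^2 J_m^2(t)}{H(t)}.
\end{equation*}
Since $\kappa\tau < \kappa$ and both lie in $(0,j_{m,1})$ (where $H>0$ and $J_m\neq 0$), the sign of $(\log\phi)'$ is positive precisely when $\eta$ is decreasing, i.e. when $\Phi := 1/\eta = H(t)/\bigl(t^2 J_m^2(t)\bigr)$ is increasing on $(0,j_{m,1})$. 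Thus the whole lemma reduces to showing that $\Phi$ is increasing.

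Next I would obtain a closed form for $\Phi$. The second identity in Lemma~\ref{lem:integral_formulas} (the Lommel integral) gives $H(t) = \tfrac12 t^2 {J_m'}^2(t) + \tfrac12(t^2-m^2)J_m^2(t)$, hence
\begin{equation*}
	\Phi(t) = \frac12\left[\left(\frac{J_m'(t)}{J_m(t)}\right)^2 + 1 - \frac{m^2}{t^2}\right].
\end{equation*}
A direct attempt to differentiate this and control the sign runs into trouble: using Bessel's equation one computes $\Phi'(t) = \frac{J_m'}{J_m}\bigl(\frac{J_m'}{J_m}\bigr)' + \frac{m^2}{t^3}$, and on the subinterval where $J_m$ is increasing the two contributions cancel to leading order as $t\to 0^+$, so no elementary estimate settles the sign. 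This cancellation is the main obstacle.

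The device that resolves it is to pass to the Hadamard product $J_m(t) = \frac{(t/2)^m}{\Gamma(m+1)}\prod_{k\ge 1}\bigl(1 - t^2/j_{m,k}^2\bigr)$, giving the Mittag--Leffler expansion $\frac{J_m'(t)}{J_m(t)} = \frac{m}{t} - \sum_{k\ge 1}\frac{2t}{j_{m,k}^2 - t^2}$. Substituting into the closed form and simplifying (the cross terms with $m/t$ and the $1-m^2/t^2$ telescope cleanly), I expect to reach the termwise representation
\begin{equation*}
	\Phi(t) = \sum_{k=1}^{\infty} \frac{2\,j_{m,k}^2}{\bigl(j_{m,k}^2 - t^2\bigr)^2}.
\end{equation*}
For $t \in (0,j_{m,1})$ every denominator $j_{m,k}^2 - t^2$ is positive and strictly decreasing in $t$, so each summand is positive and strictly increasing; hence $\Phi$ is strictly increasing, $\eta$ strictly decreasing, and $\phi$ strictly increasing, as claimed. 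This representation sidesteps the delicate case split between the regions where $J_m$ is increasing and decreasing, treating the whole interval $(0,j_{m,1})$ at once. The only remaining technical points are the routine justification of term-by-term differentiation and of the algebraic simplification leading to the series, both of which follow from locally uniform convergence of the product on compact subsets of $(0,j_{m,1})$.
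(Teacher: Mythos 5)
Your proposal is correct and follows essentially the same route as the paper: the Lommel integral (the paper's Lemma \ref{lem:integral_formulas}) gives the closed form for $H$, and the Mittag--Leffler expansion of $J_m'/J_m$ yields the termwise-increasing series for your $\Phi(t)$, which is exactly the paper's $Q(t)/(2t)$ with $Q(t)=\sum_{s\geq 1}4t\,j_{m,s}^2/(j_{m,s}^2-t^2)^2$. If anything, your bookkeeping is slightly more careful: the paper concludes from the monotonicity of $Q$ alone, whereas the chain-rule factor $\tau$ means one actually needs $Q(t)/t$ increasing --- precisely the quantity your $\Phi$ represents and your termwise argument establishes.
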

\begin{proof}
	Take $\tilde{r} = \kappa r$, then
	\begin{equation*}
		\int_{0}^{1} r J_{m}^2\left(\kappa r\right) \mathrm{d} r = \frac{1}{\kappa^2} \int_{0}^{\kappa} \tilde{r} J_{m}^2\left(\tilde{r}\right) \mathrm{d} \tilde{r}.
	\end{equation*}
	For convenience, we introduce the following notations:
	\begin{equation}\label{eq:fk_def}
		f(\kappa) := \int_{0}^{\kappa} \tilde{r} J_{m}^2\left(\tilde{r}\right) \mathrm{d} \tilde{r},
	\end{equation}
	then
	\begin{equation*}
		\phi(k) = \frac{\int_{0}^{1-\delta} r J_{m}^2\left(\kappa r\right) \mathrm{d} r}{\int_{0}^1 r J_{m}^2\left(\kappa r\right) \mathrm{d} r}=\frac{f(\kappa\tau)}{f(\kappa)},
	\end{equation*}
	where $\tau = 1-\delta$. In view of Lemma \ref{lem:integral_formulas}, we obtain
	\begin{equation}\label{eq:fk_expansion}
		f(\kappa)=\frac{1}{2} \kappa^2 J_m^{' 2}(\kappa)+\frac{1}{2}\left(\kappa^2-m^2\right) J_m^2(\kappa).
	\end{equation}
	Taking the derivative of $\phi(\kappa)$ with respect to $\kappa$ gives
	\begin{equation*}
		\phi'(\kappa) = \frac{[f(\kappa\tau)]'f(\kappa)-[f(\kappa)]'f(\kappa\tau)}{[f(\kappa)]^2} = \frac{f(\kappa\tau)}{f(\kappa)}\left(\frac{[f(\kappa\tau)]'}{f(\kappa\tau)} - \frac{[f(\kappa)]'}{f(\kappa)}\right).
	\end{equation*}
	Let $\tilde{\phi}(\kappa) = \frac{[f(\kappa)]'}{f(\kappa)}$. It follows from \eqref{eq:fk_def} that
	\begin{equation*}
		f'(\kappa) = \kappa J_m^2(\kappa),
	\end{equation*}
	together with \eqref{eq:fk_expansion} implies that
	\begin{equation*}
		\tilde{\phi}(\kappa) = \frac{2 \kappa J_m^2(\kappa)}{ \kappa^2 J_m^{' 2}(\kappa)+\left(\kappa^2-m^2\right) J_m^2(\kappa)}
		= \frac{2}{\kappa} \cdot \frac{1}{\frac{J_m^{' 2}(\kappa)}{J_m^2(\kappa)} + 1 - \frac{m^2}{\kappa^2}}.
	\end{equation*}
	
	Define
	\begin{equation*}\label{eq:psi_def}
		Q(\kappa) =\kappa \cdot \left(\frac{J_m^{' 2}(\kappa)}{J_m^2(\kappa)} + 1 - \frac{m^2}{\kappa^2} \right),\quad
		\psi(\kappa ) = \frac{J_m'(\kappa )}{J_m(\kappa)}.
	\end{equation*}
	Since the Bessel functions satisfy
	\begin{equation*}
		\kappa^2 J_m^{''} + \kappa J_m' + \left(\kappa^2 - m^2\right)J_m = 0,
	\end{equation*}
	we obtain
	\begin{equation*}
		\psi'(\kappa) = \frac{m^2}{\kappa^2} - 1 - \frac{\psi}{\kappa} - \psi^2,
	\end{equation*}
	which implies
	\begin{equation*}\label{eq:x_psi}
		Q(\kappa) = - \psi(\kappa) - \kappa\psi'(\kappa)=\left(-\frac{\kappa J_m'}{J_m}\right)'.	
	\end{equation*}
	Using the Mittag-Leffler expansion (\cite{BKP18}), for $m > 0$ we obtain
	\begin{equation*}
		Q(\kappa) = \left(-\frac{\kappa J_m'}{J_m}\right)' = \sum_{s \geq 1} \frac{4\kappa j_{m,s}^2}{(j_{m,s}^2 - \kappa^2)^2},\quad
		Q'(\kappa) = \sum_{s \geq 1} \frac{4 j_{m,s}^4+12j_{m,s}^2\kappa^2}{(j_{m,s}^2 - \kappa^2)^3}.
	\end{equation*}	
	According to \cite{AS72}, for $0 < \kappa < j_{m,1}$, the function $Q(\kappa)$ is positive and strictly increasing. This monotonicity of $Q$ implies that $\tilde{\phi}$ is strictly decreasing, which consequently implies that $\phi$ is strictly increasing.
\end{proof}

By Lemma \ref{lem:monotonic} and equations \eqref{eq:u and v}, we obtain
\begin{equation*}
	\mathcal{E}_{\delta}(u) = \frac{\|u\|_{L^2(\mathcal{N}_{\delta}(\partial \Omega))}}{\|u\|_{L^2(\Omega)}} \geq \frac{\|v\|_{L^2(\mathcal{N}_{\delta}(\partial \Omega))}}{\|v\|_{L^2(\Omega)}} = \mathcal{E}_{\delta}(v).
\end{equation*}	
This completes the proof of \eqref{eq:case1}.


\subsubsection{Case 3. $k \in I_3$.}
Under the conditions in  Corollary \ref{cor:Coro1}, let $0<\delta<1-\mathbf{n}$, $0<\varepsilon < \frac{1-\delta}{4}$. Then for any given
\begin{equation*}
	\mu > \max\left( \sqrt{\frac{1-4\varepsilon^2}{\left(1-\delta\right)^2 -4\varepsilon^2}}-1, \frac{\delta}{1-\delta}\right),
\end{equation*}	
and wave number $k$ satisfying $(1+\mu)m < k < \frac{m}{\mathbf{n}}$, we have
\begin{equation}\label{eq:case3}
	\mathcal{E}_{\delta}(v) < 1 - \varepsilon.
\end{equation}
This implies that $v$ fails to exhibit surface localization.

Recall that the function $g(x,y)$ defined in \eqref{eq:g} is strictly increasing with respect to $y$ for each fixed $x$. Adopting a proof strategy analogous to \cite[Formula (2.15)]{CDHLW21}, we establish the following lemma.
\begin{lemma}\label{lem:general_inside_estimate}
	For $0 < \delta < 1$, $0<\frac{\delta}{1-\delta} < \mu$, and $\left(1 + \mu\right)m < k$, we have
	\begin{equation}
		1 - \mathcal{E}^2_{\delta}(v) =\frac{\int_{0}^{1-\delta} r J_{m}^2\left(k r\right) \mathrm{d} r}{\int_{0}^1 r J_{m}^2\left(k r\right) \mathrm{d} r} > \frac{1}{2} g(1 - \delta, 1+ \mu),
		\nonumber
	\end{equation}
	and
	\begin{equation}
		g(1 - \delta, 1+ \mu)\rightarrow 0,\quad \text{as} \quad \mu\rightarrow \frac{\delta}{1-\delta}.
		\nonumber
	\end{equation}
\end{lemma}

Lemma \ref{lem:general_inside_estimate} establishes that under the conditions $0<\delta < 1 - \mathbf{n}$ and $0<\varepsilon < \frac{1-\delta}{4}$, for any
$$
\mu > \max\left( \sqrt{\frac{1-4\varepsilon^2}{(1-\delta)^2 -4\varepsilon^2}} - 1,\  \frac{\delta}{1-\delta}\right),
$$
we have
\begin{equation*}
	\mathcal{E}^2_{\delta}(v) < 1 - \frac{1}{2} g(1 - \delta, 1 + \mu) < 1 - 2\varepsilon,
\end{equation*}
which means
\begin{equation*}
	\mathcal{E}_{\delta}(v) < \sqrt{1 - 2\varepsilon} < 1 - \varepsilon.
\end{equation*}

This completes the proof of (\ref{eq:case3}).

Based on the preceding analysis, $u$ generally exhibits stronger surface localization than $v$ (roughly speaking). Therefore, it suffices to consider only $u$ in our subsequent treatment.


\subsection{Upper bound of $\rho_{\tilde{\varepsilon}, \delta}$ in 2D}
In this subsection, we discuss the upper bound of  $\rho_{\tilde{\varepsilon}, \delta}$.

In light of Lemma \ref{lem:general_inside_estimate}, we have following lemma.
\begin{lemma}\label{lem:inside_estimate}
	For $0 < \mathbf{n} < 1$, $0 < \delta < 1$, $0<\frac{\delta}{\mathbf{n}(1 - \delta)}  < \tilde{\delta}$, and $\left(\frac{1}{\mathbf{n}} + \tilde{\delta} \right)m <k$, we have
	\begin{equation*}
		1-\mathcal{E}^2_{\delta}(u) > \frac{1}{2} g\left(1-\delta, 1 + \mathbf{n} \tilde{\delta}\right),\quad
		1-\mathcal{E}^2_{\delta}(v) > \frac{1}{2} g\left(1-\delta, \frac{1 + \mathbf{n} \tilde{\delta}}{\mathbf{n}}\right).
	\end{equation*}
\end{lemma}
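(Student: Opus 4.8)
The plan is to obtain both inequalities as immediate consequences of Lemma~\ref{lem:general_inside_estimate} by rescaling the wave number and choosing the free parameter $\mu$ appropriately. The essential observation is that, although Lemma~\ref{lem:general_inside_estimate} is phrased in terms of $v$ and the wave number $k$, its content is really the pointwise estimate
\[
\frac{\int_{0}^{1-\delta} r J_{m}^2(\kappa r)\,\mathrm{d}r}{\int_{0}^{1} r J_{m}^2(\kappa r)\,\mathrm{d}r} > \frac{1}{2}\, g(1-\delta,\, 1+\mu),
\]
valid for any wave number $\kappa > 0$ obeying $(1+\mu)m < \kappa$ and $\frac{\delta}{1-\delta} < \mu$; the identification with $1-\mathcal{E}^2_\delta(v)$ comes only from $v = C_2 J_m(kr)\,e^{im\theta}$. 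Since, by \eqref{eq:u and v}, the quantity $1-\mathcal{E}^2_\delta(u)$ is this very ratio evaluated at $\kappa = k\mathbf{n}$, both assertions reduce to verifying the two hypotheses of Lemma~\ref{lem:general_inside_estimate} for the two relevant wave numbers.

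For the $u$-estimate I would apply the lemma with $\kappa = k\mathbf{n}$ and $\mu = \mathbf{n}\tilde{\delta}$. The separation hypothesis $(1+\mu)m < \kappa$ reads $(1+\mathbf{n}\tilde{\delta})m < k\mathbf{n}$, which upon dividing by $\mathbf{n}$ is exactly the standing assumption $\bigl(\frac{1}{\mathbf{n}}+\tilde{\delta}\bigr)m < k$; and the admissibility hypothesis $\frac{\delta}{1-\delta} < \mu$ reads $\frac{\delta}{1-\delta} < \mathbf{n}\tilde{\delta}$, equivalently $\frac{\delta}{\mathbf{n}(1-\delta)} < \tilde{\delta}$, which is again assumed. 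The lemma then yields $1-\mathcal{E}^2_\delta(u) > \frac{1}{2}\,g(1-\delta,\, 1+\mathbf{n}\tilde{\delta})$, as claimed.

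For the $v$-estimate I would apply the lemma directly with $\kappa = k$ and $\mu = \frac{1+\mathbf{n}\tilde{\delta}}{\mathbf{n}} - 1 = \frac{1-\mathbf{n}}{\mathbf{n}} + \tilde{\delta}$, so that $1+\mu = \frac{1+\mathbf{n}\tilde{\delta}}{\mathbf{n}}$ produces precisely the second argument of $g$ in the target. Here $(1+\mu)m = \bigl(\frac{1}{\mathbf{n}}+\tilde{\delta}\bigr)m < k$ is again the standing assumption, while the admissibility condition follows from $\mu = \frac{1-\mathbf{n}}{\mathbf{n}} + \tilde{\delta} > \tilde{\delta} > \frac{\delta}{\mathbf{n}(1-\delta)} > \frac{\delta}{1-\delta}$, using $0 < \mathbf{n} < 1$. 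This gives $1-\mathcal{E}^2_\delta(v) > \frac{1}{2}\,g\bigl(1-\delta,\, \frac{1+\mathbf{n}\tilde{\delta}}{\mathbf{n}}\bigr)$. I do not expect a serious obstacle: the statement is genuinely a corollary of Lemma~\ref{lem:general_inside_estimate}, and the only point requiring care is the reinterpretation of that lemma as a statement about the rescaled wave number $k\mathbf{n}$ governing $u$, together with the elementary bookkeeping that translates the single hypothesis $\bigl(\frac{1}{\mathbf{n}}+\tilde{\delta}\bigr)m < k$ into the two separation conditions and confirms $\mu > \frac{\delta}{1-\delta}$ in each case—which is exactly where the hypotheses $\frac{\delta}{\mathbf{n}(1-\delta)} < \tilde{\delta}$ and $0 < \mathbf{n} < 1$ are consumed.
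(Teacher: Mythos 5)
Your proposal is correct and follows exactly the route the paper intends: the paper states this lemma as an immediate consequence of Lemma~\ref{lem:general_inside_estimate} (``In light of Lemma~\ref{lem:general_inside_estimate}, we have\dots''), and your substitutions $\kappa = k\mathbf{n}$, $\mu = \mathbf{n}\tilde{\delta}$ for $u$ and $\kappa = k$, $\mu = \frac{1+\mathbf{n}\tilde{\delta}}{\mathbf{n}}-1$ for $v$ are precisely the bookkeeping the paper leaves implicit. Your verification of the admissibility condition for the $v$-case via $\tilde{\delta} > \frac{\delta}{\mathbf{n}(1-\delta)} > \frac{\delta}{1-\delta}$ (using $0<\mathbf{n}<1$) is the one nontrivial check, and it is done correctly.
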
	

Let
\begin{equation}\label{eq:tilde varepsilon}
	\tilde{\varepsilon}=\frac{1}{4} g\left(1-\delta, 1 + \mathbf{n} \tilde{\delta}\right).
\end{equation}
Then, from $g$ is strictly increasing with respect to $y$ for each fixed $x$, we have
\begin{equation*}
	\mathcal{E}_{\delta}(\psi) < 1 - \tilde{\varepsilon},\quad \psi = u \,\,\, \text{and} \,\,\, v.
\end{equation*}
Under the conditions in Lemma \ref{lem:inside_estimate}, such eigenfunctions cannot be $\tilde{\varepsilon}$-surface-localized on $\mathcal{N}_{\delta}(\partial\Omega)$. In what follows, we focus on estimating the numbers of eigenvalues $k$ that satisfies these conditions; thereby obtain an upper bound for $\rho_{\tilde{\varepsilon}, \delta}$. To this end,
we define the set $\tilde{E}^{uc}(R,\tilde{\varepsilon},\delta)$ as the collection of eigenvalues $k$ that satisfy the conditions in Lemma  \ref{lem:inside_estimate},
$$
\tilde{E}^{uc}(R,\tilde{\varepsilon},\delta) = \left\{ (u,v,k) ; \text{$k$ is an ITE,} \left(\frac{1}{\mathbf{n}} + \tilde{\delta} \right)m <k < R\right\},
$$
and the corresponding counting function is defined as
\begin{equation}
	\tilde{N}^{uc}(R,\tilde{\varepsilon},\delta) :=\sharp \tilde{E}^{uc}(R,\tilde{\varepsilon},\delta) .
	\nonumber
\end{equation}
In the previous analysis, it is not difficult to observe that
\begin{equation*}
	\tilde{N}^{uc}(R,\tilde{\varepsilon},\delta)  \leq N^{uc}(R,\tilde{\varepsilon},\delta).
\end{equation*}
Therefore, in the subsequent analysis, we only need to estimate the counting function  $\tilde{N}^{uc}(R,\tilde{\varepsilon},\delta)$.

In fact, one can decompose $\tilde{E}^{uc}(R,\tilde{\varepsilon},\delta)$ as
\begin{equation}\label{tilde_eq:E_decomposition}
	\tilde{E}^{uc}(R,\tilde{\varepsilon},\delta) = \bigcup_{m}^{\infty} \tilde{E}^{uc}_m\left(R,\tilde{\varepsilon},\delta \right),
	\nonumber
\end{equation}
where
\begin{equation*}
	\tilde{E}^{uc}_m\left(R, \tilde{\varepsilon} ,\delta\right) = \left\{ (u,v,k) ; k \text{ is a root of }F_m(x),\,\left(\frac{1}{\mathbf{n}} + \tilde{\delta} \right)m <k < R\right\}.
\end{equation*}
The corresponding counting function is given by
\begin{equation}\label{tilde_N_uc}
	\tilde{N}^{uc}_m (R,\tilde{\varepsilon},\delta) := \sharp \tilde{E}^{uc}_m\left(R, \tilde{\varepsilon} ,\delta\right).
\end{equation}

Now, we are at the stage to show the proof of Theorem \ref{thm:upper_bdd} for $N=2$.
\begin{proof}
	For convenience, we define
	\begin{equation*}
		C(\mathbf{n}, \tilde{\delta}, R) = \left\lceil\frac{ R }{\left(\frac{1}{\mathbf{n}} + \tilde{\delta}\right)}\right\rceil.
	\end{equation*}
	Observing the analogous structures of the definitions in \eqref{N_m} and \eqref{tilde_N_uc}, we derive an estimate for $\tilde{N}^{uc}(R,\tilde{\varepsilon},\delta)$ from the established estimate for $N(R)$:
	\begin{eqnarray*}\label{eq:I1I2}
		\tilde{N}^{uc} (R,\tilde{\varepsilon},\delta) &=& \sum_{m=0}^{\infty} \tilde{N}^{uc}_m (R,\tilde{\varepsilon},\delta)=\sum_{m=0}^{C(\mathbf{n}, \tilde{\delta}, R)} \tilde{N}^{uc}_m (R,\tilde{\varepsilon},\delta) \\
		&= & \sum_{m=0}^{C(\mathbf{n}, \tilde{\delta}, R)} \left(N_m\left(R\right) - N_m\left(\left(\frac{1}{\mathbf{n}} + \tilde{\delta} \right)m\right)\right)+\mathcal{O}(R) \nonumber \\
		& = &\sum_{m=0}^{C(\mathbf{n}, \tilde{\delta}, R)} N_m\left(R\right) - \sum_{m=0}^{C(\mathbf{n}, \tilde{\delta}, R)}N_m\left(\left(\frac{1}{\mathbf{n}} + \tilde{\delta} \right)m\right)+\mathcal{O}(R) \nonumber \\
		&:=& I_1 - I_2+\mathcal{O}(R).
	\end{eqnarray*}
	
	For $I_1$, \eqref{eq:M_upper_bound} gives
	\begin{eqnarray}\label{eq:I3I4}
		I_1 &=& \sum_{m=0}^{C(\mathbf{n}, \tilde{\delta}, R)} N_m\left(R\right)=2\sum_{m=1}^{C(\mathbf{n}, \tilde{\delta}, R)} \left( N_m^{B}\left(R\right) - N_m^{B}\left(\mathbf{n}R\right)\right) + \mathcal{O}\left( R \right) \nonumber \\
		&=& 2\sum_{m=1}^{C(\mathbf{n}, \tilde{\delta}, R)}  N_m^{B}\left(R\right) - 2\sum_{m=1}^{C(\mathbf{n}, \tilde{\delta}, R)}N_m^{B}\left(\mathbf{n}R\right) + \mathcal{O}\left( R \right) \nonumber \\
		&:=& I_{3} - I_{4} + \mathcal{O}\left( R \right).
	\end{eqnarray}
	Performing a computation analogous to the derivation of \eqref{eq:N^c_lower_bound}, we obtain
	\begin{eqnarray}\label{eq:I3 estimate}
		I_3  &=& 2\sum_{m=1}^{C(\mathbf{n}, \tilde{\delta}, R)}  N_m^{B}\left(R\right) 	\nonumber\\
		&=& 2\sum_{m=1}^{C(\mathbf{n}, \tilde{\delta}, R)}\left( \frac{1}{\pi}\left(\sqrt{R^2-m^2}-m \arccos \left(\frac{m}{R}\right)\right)+\mathcal{O}(\ln R)\right) 	\nonumber \\
		&=& \frac{2}{\pi}P_1\left(\frac{\mathbf{n}}{1+\mathbf{n}\tilde{\delta}}\right) R^2 + \mathcal{O}(R\ln R),
	\end{eqnarray}
	where
	\begin{equation*}
		P_1(x) = \frac{3}{4} x \sqrt{1-x^2}+\frac{1}{4} \arcsin \left(x\right)-\frac{1}{2} x^2 \arccos \left(x\right).
	\end{equation*}
	On the other hand, by \eqref{eq:N_m_decomposition} and \eqref{eq:N_m^B(R)}, one has
	\begin{eqnarray}\label{eq:I4 estimate}
		I_4 & = &  2\sum_{m=1}^{C(\mathbf{n}, \tilde{\delta}, R)} N_m(\mathbf{n}R) \nonumber \\
		& = & 2\sum_{m=1}^{C(\mathbf{n}, \tilde{\delta}, R)}\left( \frac{1}{\pi}\left(\sqrt{\mathbf{n}^2R^2-m^2}-m \arccos \left(\frac{m}{\mathbf{n}R}\right)\right)+\mathcal{O}(\ln R)\right) \nonumber \\
		& = & \frac{2}{\pi}P_1\left(\frac{1}{1+\mathbf{n}\tilde{\delta}}\right) \mathbf{n}^2R^2 + \mathcal{O}(R\ln R).
	\end{eqnarray}
	Combining (\ref{eq:I3 estimate}) and (\ref{eq:I4 estimate}), and substituting into (\ref{eq:I3I4}), we establish
	\begin{equation}
		\begin{aligned}
			I_1 = \frac{2}{\pi}\left( P_1\left(\frac{\mathbf{n}}{1+\mathbf{n}\tilde{\delta}}\right) - P_1\left(\frac{1}{1+\mathbf{n}\tilde{\delta}}\right) \mathbf{n}^2\right)R^2	+ \mathcal{O}(R\ln R).
		\end{aligned}
		\nonumber
	\end{equation}
	For $I_2$, one has
	\begin{equation}
		\begin{aligned}
			I_2 =& \sum_{m=1}^{C(\mathbf{n}, \tilde{\delta}, R)}N_m\left(\left(\frac{1 + \mathbf{n}\tilde{\delta} }{\mathbf{n}}\right)m\right)\\
			=& 2\sum_{m=1}^{C(\mathbf{n}, \tilde{\delta}, R)}\left( N_m^B\left(\left(\frac{1 + \mathbf{n}\tilde{\delta} }{\mathbf{n}}\right)m\right) - N_m^B\left(\left(1 + \mathbf{n}\tilde{\delta} \right)m\right)\right)+\mathcal{O}(R) \\
			=& 2\sum_{m=1}^{C(\mathbf{n}, \tilde{\delta}, R)}\left(\frac{m}{\pi}\left(\sqrt{\left(\frac{1 + \mathbf{n}\tilde{\delta} }{\mathbf{n}}\right)^2-1}-\arccos \left(\frac{\mathbf{n}}{1 + \mathbf{n}\tilde{\delta} }\right)\right)+\mathcal{O}(\ln R)\right)\\
			&- 2\sum_{m=1}^{C(\mathbf{n}, \tilde{\delta}, R)}\left(\frac{m}{\pi}\left(\sqrt{\left(1 + \mathbf{n}\tilde{\delta} \right)^2-1}-\arccos \left(\frac{1}{1 + \mathbf{n}\tilde{\delta} }\right)\right)+\mathcal{O}(\ln R)\right)+\mathcal{O}(R) \\
			=&\frac{1}{\pi}\left(\frac{\mathbf{n}}{1 + \mathbf{n}\tilde{\delta} }\right)^2\left(P_2\left(\frac{\mathbf{n}}{1 + \mathbf{n}\tilde{\delta} }\right) - P_2\left(\frac{1}{1 + \mathbf{n}\tilde{\delta} }\right)\right)R^2 +\mathcal{O}(R\ln R),
		\end{aligned}
		\nonumber
	\end{equation}
	where
	\begin{equation}\label{eq:P2 def}
		P_2(x) = \frac{\sqrt{1 - x^2}}{x} - \arccos \left(x\right).
	\end{equation}
	
	Combining the estimates for $I_1$ and $I_2$, we obtain
	\begin{eqnarray*}
		\displaystyle \tilde{N}^{uc} (R,\tilde{\varepsilon},\delta) &=& \frac{2}{\pi}\left( P_1\left(\frac{\mathbf{n}}{1+\mathbf{n}\tilde{\delta}}\right) - P_1\left(\frac{1}{1+\mathbf{n}\tilde{\delta}}\right) \mathbf{n}^2\right)R^2 \\
		\displaystyle	&-&\frac{1}{\pi}\left(\frac{\mathbf{n}}{1 + \mathbf{n}\tilde{\delta} }\right)^2\left(P_2\left(\frac{\mathbf{n}}{1 + \mathbf{n}\tilde{\delta} }\right) - P_2\left(\frac{1}{1 + \mathbf{n}\tilde{\delta} }\right)\right)R^2+ \mathcal{O}(R\ln R),
	\end{eqnarray*}
	together with
	\begin{eqnarray*}
		\frac{ N^{c} (R,\tilde{\varepsilon},\delta)}{N(R)}  \leq \frac{ N(R) - \tilde{N}^{uc} (R,\tilde{\varepsilon},\delta)}{N(R)},
	\end{eqnarray*}
	implies the conclusion of Theorem \ref{thm:upper_bdd} for $N=2$.
\end{proof}

To facilitate transparent comparison between the bounds, recall the lower bound and upper bound for $N=2$ given by
\begin{equation*}
	\frac{2 P^{(2)} (\mathbf{n})}{\pi(1-\mathbf{n}^2)} \quad \mbox{and} \quad  \frac{2 B_{U,\tilde{\delta}}^{(2)}\left(\mathbf{n}\right)}{\pi\left(1-\mathbf{n}^2\right)},
\end{equation*}
respectively. We explain the process by which the upper bound converges to the lower bound. For a given pair $(\delta, \mathbf{n})$, consider the limiting process as $\tilde{\delta} \to \frac{\delta}{\mathbf{n}(1 - \delta)}$. Then:
\begin{enumerate}
	\item A direct computation shows that $\tilde{\varepsilon} \to 0$.
	
	\item The lower bound remains unchanged throughout this limiting process.
	
	\item The upper bound converges to the lower bound; however, a gap may still remain. Let $\mathcal{G}(\delta)$ denote the limiting difference between the upper and lower bounds:
	\begin{equation*}
		\mathcal{G}(\delta) := \lim_{\tilde{\delta} \to \frac{\delta}{\mathbf{n}(1 - \delta)}} \left( \frac{2 B^{(2)}_{U,\tilde{\delta}}(\mathbf{n})}{\pi(1 - \mathbf{n}^2)} - \frac{2 P^{(2)}(\mathbf{n})}{\pi(1 - \mathbf{n}^2)} \right).
	\end{equation*}
	One can verify that as $\delta \to 0$, the gap $\mathcal{G}(\delta)$ vanishes. This indicates that the convergence is pointwise in $\mathbf{n}$.
	To illustrate, setting $\delta = 0.1$ and $\tilde{\delta} = \frac{2\delta}{\mathbf{n}(1 - \delta)}$, the corresponding lower and upper bounds are illustrated in Figure~\ref{fig:lower and upper bound 2D}.
\end{enumerate}

\begin{figure}
	\centering
	\includegraphics[width=0.6\linewidth]{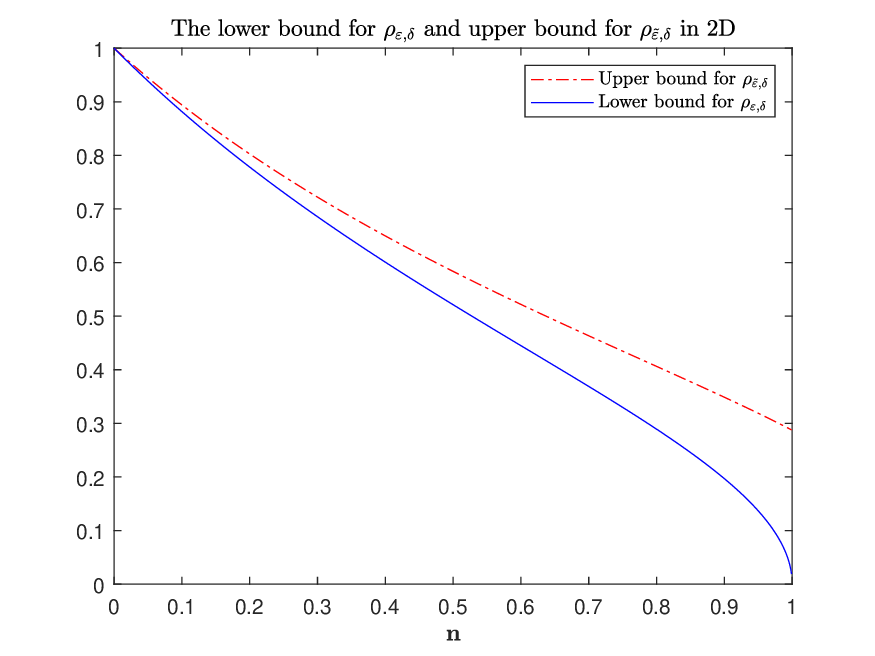}
	\caption{The graph of the lower bound for $\rho_{\varepsilon, \delta}$ and upper bound for $\rho_{\tilde{\varepsilon}, \delta}$ in 2D.}
	\label{fig:lower and upper bound 2D}
\end{figure}


\section{Proof of main results: 3D case}\label{3Dcase}
In what follows, we only sketch the necessary modifications. To avoid ambiguity, in this section we use the superscript ``$(3)$" to denote quantities in the three-dimensional case.
According to \cite{PS14}, in $\mathbb{R}^3$, $k$ is a real positive root of
\begin{equation*}\label{eq:F_3_m}
	F^{(3)}_{m}(x):=\mathbf{n}j_{m}(x) j_{m}'(\mathbf{n} x)-j_{m}(\mathbf{n}x) j_{m}'(x), \text{ for some }m \in \mathbb{N},
\end{equation*}
where $j_m$ is the spherical Bessel functions of order $m$.

Next, we discuss the properties of $F^{(3)}_{m}$. From \cite{AS72}, the spherical Bessel functions $j_m$ satisfy
\begin{equation*}
	x^2 j_m^{\prime\prime}(x) + 2x j_m^{\prime}(x) + \left(x^2 - m(m+1)\right)j_m(x) = 0.
\end{equation*}
Using this equation, a straightforward computation yields the derivative of $F^{(3)}$:
\begin{equation*}
	\left(F^{(3)}_{m}\right)' = -2x^{-1}F^{(3)}_{m} + (1 - \mathbf{n}^2)j_m(x)j_m(\mathbf{n}x).
\end{equation*}
Following this, we turn to the distribution of the zeros of $F^{(3)}$. These zeros correspond to solutions of the equation
\begin{equation}\label{eq:F}
	\mathbf{n}j_{m}(x) j_{m}'(\mathbf{n} x) = j_{m}(\mathbf{n}x) j_{m}'(x).
\end{equation}
\begin{lemma}\label{lem:F_m_intersect}
	For $0 <\mathbf{n}< 1$, at each solution $k_0$ of $H(x) = G(x)$, where
	\begin{equation*}
		H(x) = \frac{j_m'(x)}{j_m(x)}, \quad   G(x) = \mathbf{n} H(\mathbf{n} x),
	\end{equation*}
	we have
	\begin{equation*}
		H'(k_0) < G'(k_0).
	\end{equation*}
\end{lemma}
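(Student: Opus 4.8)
The plan is to convert the transcendental crossing condition into two Riccati differential equations, one for $H$ and one for $G$, and then to observe that at any solution of $H(x)=G(x)$ the nonlinear and frequency-dependent parts cancel identically, leaving only the difference of the zeroth-order coefficients. Since $H$ and $G$ are (up to scaling) logarithmic derivatives of solutions of the same second-order equation at different frequencies, this cancellation is structurally forced.

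First I would record the spherical Bessel ODE
\begin{equation*}
	x^2 j_m''(x) + 2x j_m'(x) + \bigl(x^2 - m(m+1)\bigr) j_m(x) = 0,
\end{equation*}
and differentiate $H = j_m'/j_m$. Using $H' = j_m''/j_m - H^2$ and substituting $j_m''/j_m$ from the ODE yields the Riccati equation
\begin{equation*}
	H'(x) = -H(x)^2 - \frac{2}{x}H(x) - 1 + \frac{m(m+1)}{x^2}.
\end{equation*}

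Second, for $G(x) = \mathbf{n}\, H(\mathbf{n}x)$ the chain rule gives $G'(x) = \mathbf{n}^2 H'(\mathbf{n}x)$. Evaluating the Riccati equation for $H$ at the argument $\mathbf{n}x$, multiplying through by $\mathbf{n}^2$, and eliminating $H(\mathbf{n}x)$ via the defining relation $H(\mathbf{n}x) = G(x)/\mathbf{n}$, I obtain the companion identity
\begin{equation*}
	G'(x) = -G(x)^2 - \frac{2}{x}G(x) - \mathbf{n}^2 + \frac{m(m+1)}{x^2}.
\end{equation*}
The crucial feature is that the two Riccati equations share exactly the same quadratic term, the same $-\tfrac{2}{x}(\cdot)$ term, and the same $m(m+1)/x^2$ term; only the constant term differs, being $-1$ for $H$ and $-\mathbf{n}^2$ for $G$.

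Finally, at a solution $k_0$ of $H(x) = G(x)$ I set $w = H(k_0) = G(k_0)$ and subtract the two identities. Every term depending on $w$ or on $k_0$ cancels, leaving
\begin{equation*}
	G'(k_0) - H'(k_0) = (-\mathbf{n}^2) - (-1) = 1 - \mathbf{n}^2 > 0,
\end{equation*}
since $0 < \mathbf{n} < 1$. This is precisely $H'(k_0) < G'(k_0)$. I do not expect a genuine obstacle here: once the Riccati form is in place the argument is a one-line subtraction. The only point deserving a word of care is that $H$ and $G$ must be well defined at $k_0$, i.e. $j_m(k_0) \neq 0$ and $j_m(\mathbf{n}k_0) \neq 0$; but this is automatic, since the very equation $H(k_0) = G(k_0)$ presupposes that both logarithmic derivatives exist at $k_0$.
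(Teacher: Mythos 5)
Your proof is correct, and like the paper's it actually establishes the stronger exact identity $G'(k_0)-H'(k_0)=1-\mathbf{n}^2$, but it takes a genuinely different route. The paper argues through the Wronskian-type function $F^{(3)}_m(x)=\mathbf{n}\,j_m(x)\,j_m'(\mathbf{n}x)-j_m(\mathbf{n}x)\,j_m'(x)$, whose zeros are the transmission eigenvalues: it first derives from the spherical Bessel ODE the identity $(F^{(3)}_m)'=-2x^{-1}F^{(3)}_m+(1-\mathbf{n}^2)\,j_m(x)\,j_m(\mathbf{n}x)$, then writes the difference $P:=G-H$ as $F^{(3)}_m/\bigl(j_m(x)j_m(\mathbf{n}x)\bigr)=(1-\mathbf{n}^2)\,F^{(3)}_m/\bigl((F^{(3)}_m)'+2x^{-1}F^{(3)}_m\bigr)$, and finally applies the quotient rule at a \emph{simple} zero $k_0$ of $F^{(3)}_m$ to obtain $P'(k_0)=1-\mathbf{n}^2$. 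You bypass $F^{(3)}_m$ entirely: you put $H$ and $G$ into Riccati form using the same ODE and subtract the two equations at a crossing point, where the quadratic term, the $2/x$ term, and the $m(m+1)/x^2$ term cancel identically. Your computation checks out (the key step—evaluating the Riccati equation at argument $\mathbf{n}x$, rescaling by $\mathbf{n}^2$, and substituting $H(\mathbf{n}x)=G(x)/\mathbf{n}$—is exactly right), and your closing remark about well-definedness of $H$ and $G$ at $k_0$ is the correct disclaimer. Comparing the two approaches: yours needs no simplicity hypothesis on the zero—any point where $H$ and $G$ are defined and equal works—whereas the paper's quotient-rule step requires $(F^{(3)}_m)'(k_0)\neq 0$ (which is in fact automatic from its own derivative identity, but is nonetheless invoked as a hypothesis); yours is also arguably more transparent, since the source of the constant $1-\mathbf{n}^2$ is visibly the difference of the zeroth-order coefficients of the two Helmholtz-type equations. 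What the paper's route buys in exchange is that the transversality conclusion is phrased directly in terms of $F^{(3)}_m$, the function whose zeros are counted in the subsequent Weyl-asymptotics argument, so no translation between crossings of $H=G$ and zeros of $F^{(3)}_m$ is needed afterwards.
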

\begin{proof}
	Near every simple zero $k_0$ of $F^{(3)}_{m}$, we have $j_m\left(k_0\right), j_m\left(\mathbf{n} k_0\right) \neq 0$. Then we can rewrite \eqref{eq:F} near $k_0$ as
	\begin{equation*}
		\mathbf{n} \frac{j_m'(\mathbf{n} x)}{j_m(\mathbf{n} x)}=\frac{j_m'(x)}{j_m(x)}.
	\end{equation*}
	For simplicity, we temporarily omit the subscript $m$ and superscript $(3)$ in $F^{(3)}_{m}$. Defining the function
	\begin{equation*}
		P:=\mathbf{n} \frac{j_m'(\mathbf{n} x)}{j_m(\mathbf{n} x)}-\frac{j_m'(x)}{j_m(x)} = \frac{F}{j_m(x)j_m(\mathbf{n} x)}=\left(1-\mathbf{n}^2\right) \frac{F}{F'+2x^{-1} F},
	\end{equation*}
	we find that at any simple zero $k_0$,
	\begin{equation*}
		P'(k_0)=\left(1-\mathbf{n}^2\right) \frac{F'\left(F'+2x^{-1} F\right)-F\left(F'+2x^{-1} F\right)'}{\left(F'+2x^{-1} F\right)^2}=1-\mathbf{n}^2 .
	\end{equation*}
	This completes the proof.
\end{proof}

In $\mathbb{R}^3$, the eigenfunctions $u$ and $v$ are given by
\begin{equation*}
	u = C_1 j_m(k\mathbf{n}r)\cdot Y_m^l(\hat{x}),\quad v = C_2 j_m(kr)\cdot Y_m^l(\hat{x}),\quad  m \in \mathbb{N}_+, \quad -m\leq l \leq m,
\end{equation*}
where $\hat{x} := x/|x|$ and $Y_m^l$ denotes the spherical harmonic function of degree $m$ and order $l$.

We define the counting function for the real interior transmission eigenvalues (ITEs) of \eqref{eq:trans1} in $\mathbb{R}^3$ (counted with geometric multiplicities) as
\begin{equation*}
	N^{(3)}(R):=\sharp \{(u,v,k): \text{$k$ is an ITE},0< k<R \}.
\end{equation*}
To emphasize that \eqref{eq:E_old} and \eqref{eq:E^uc_old} correspond to the three-dimensional setting, we henceforth denote them by $ E^{(3),c}\left(R, \varepsilon, \delta \right)$ and $E^{(3),uc}\left(R, \varepsilon, \delta \right)$, respectively.

Similar to the two-dimensional case, one can decompose the functions $ E^{(3),c}(R, \varepsilon,\\ \delta)$ and $E^{(3),\mathrm{uc}}(R, \varepsilon, \delta)$ as
\begin{equation}\label{eq:E_decomposition_3D}
	E^{(3),c}\left(R, \varepsilon, \delta \right) = \bigcup_{m}^{\infty}  E_m^{(3),c}\left(R, \varepsilon, \delta \right),\quad E^{(3),uc}\left(R, \varepsilon, \delta \right) = \bigcup_{m}^{\infty} E_m^{(3),uc}\left(R, \varepsilon, \delta \right),
\end{equation}
where
$$\begin{aligned}
	& E_m^{(3),c}\left(R, \varepsilon, \delta \right)\\& = \left\{ (u,v,k) ; k \text{ is a root of }F^{(3)}_m(x),\, 0<k<R, \,\mathcal{E}_\delta(\psi) > 1 - \varepsilon, \, \psi = u \text{ or } v\right\},
\end{aligned}
$$
and
$$ \begin{aligned}&E_m^{(3),uc}\left(R, \varepsilon, \delta \right) \\&= \left\{ (u,v,k) ; k \text{ is a root of }F^{(3)}_m(x),\, 0<k<R, \,\mathcal{E}_\delta(\psi) \leq 1 - \varepsilon, \, \psi = u \text{ and } v\right\}.\end{aligned}
$$
The corresponding counting function $N_{m}^{(3),c}(R, \varepsilon, \delta)$ and $N_{m}^{(3),uc}(R, \varepsilon, \delta)$ can be defined as
\begin{equation*}
	N_m^{(3),c}(R, \varepsilon, \delta) :=\sharp E_m^{(3)}\left(R, \varepsilon, \delta \right),\quad N_m^{(3),uc}(R, \varepsilon, \delta) :=\sharp E_m^{(3),uc}\left(R, \varepsilon, \delta \right).
\end{equation*}

In fact, $N_m^{(3)}(R)$ is relative to the counting function of the roots of the spherical Bessel functions $j_m$.  The corresponding counting function $N^{(3),sB}_{m}(R)$ is defined as follows
\begin{equation*}
	N^{(3),sB}_{m}(R):=\sharp\{k\in\mathbb{R}: \text{$k$ is a root of $j_{m}$},0 < k <R\}.
\end{equation*}
The spherical Bessel function connects to the standard Bessel function through the relation
\begin{equation}\label{eq:j_m}
	j_m(|x|) = \sqrt{\frac{\pi}{2|x|}} J_{m + \frac{1}{2}}(|x|).
\end{equation}
From \eqref{eq:j_m}, we derive
\begin{equation}\label{eq:sB_to_B}
	N^{(3),sB}_{m}(R) =\sharp\{k\in\mathbb{R}: \text{$k$ is a root of $J_{m+\frac{1}{2}}$},0 < k <R\}.
\end{equation}
Denote $N^{(3),\mathrm{sB}}_{m}(R)$ by $N^{B}_{m+\frac{1}{2}}(R)$.
\begin{remark}\label{rem:3D}
	Similar to Remark \ref{rem:2D}, it follows from \cite[Ch. 7.6.5]{OL74} that the $k$-th positive zero $j_{\frac{1}{2},k}$ of $J_{\frac{1}{2}}$ satisfies
	\begin{equation*}
		j_{\frac{1}{2},k} = k\pi  + O(k^{-1}),
	\end{equation*}
	Consequently, the counting function $N^B_{\frac{1}{2}}(R)$ satisfies
	\begin{equation*}
		N^B_{\frac{1}{2}}(R) = \frac{R}{\pi} + O(1), \quad R \to \infty.
	\end{equation*}
\end{remark}

In light of \cite{PS14}, one deduces from Lemma \ref{lem:F_m_intersect}, \eqref{eq:j_m}, and \eqref{eq:sB_to_B}, we have
\begin{equation}\label{N_m_decomposition_3D}
	\begin{aligned}
		N^{(3)}_{m}(R)&=\left(2m+1\right) \left(N^{(3),sB}_{m}(R)-N^{(3),sB}_{m}(\mathbf{n}R) + \gamma^{(3)}_{m}\right)\\
		&=\left(2m+1\right) \left(N^{B}_{m+\frac{1}{2}}(R)-N^{B}_{m+\frac{1}{2}}(\mathbf{n}R) + \gamma^{(3)}_{m}\right),
	\end{aligned}
\end{equation}
where $\gamma_{m}^{(3)}$ is an error term taking values in $\{0,1\}$ depending on whether
$R$ is exactly a zero of the Bessel function.
Similar to \eqref{eq:M_upper_bound} and \eqref{eq:PS_error_term}, we have
\begin{equation}\label{eq:M3}
	M^{(3)}(R, \mathbf{n})<\left\lceil R - \frac{1}{2} \right\rceil,\quad \sum_{m=0}^{\infty}\left(2m+1\right) \gamma_m^{(3)}=\mathcal{O}(R^2).
\end{equation}

In the following subsections, we establish lower and upper bounds for the limit:
\begin{equation*}\label{eq:density_3D}
	\liminf\limits_{R\rightarrow\infty}\frac{N^{(3),c}(R,\varepsilon, \delta)}{N^{(3)}(R)}=\rho_{\varepsilon, \delta}^{(3)}.
\end{equation*}


\subsection{A uniform lower bound estimate in 3D}
We establish a uniform lower bound, with our main result for the three-dimensional case stated in Theorem \ref{thm:lower_bdd}.

Analogous to the two-dimensional case, the asymptotic behavior of $N^{(3)}(R)$ can be derived from the Weyl law \cite[Th. 1.6.1]{SV97}, which gives
\begin{equation}\label{eq:N^3(R)}
	N^{(3)}(R) = (2\pi)^{-3}\omega_3^2(1-\mathbf{n}^3)R^3 + \mathcal{O}(R^2)
\end{equation}
Next, we estimate the counting function $N_m^{(3),\mathrm{c}}(R, \varepsilon, \delta)$. A straightforward calculation yields
\begin{equation*}
	\left(\mathcal{E}^{(3)}_{\delta}(u)\right)^2 = \frac{\int_{1-\delta}^1 r J_{m+\frac{1}{2}}^2\left(k\mathbf{n} r\right) \mathrm{d} r}{\int_{0}^1 r J_{m+\frac{1}{2}}^2\left(k\mathbf{n} r\right) \mathrm{d} r},
\end{equation*}
leading to the following estimates.
\begin{theorem}\label{thm:key_3D}
	For $0 < \mathbf{n} < 1$, $0 < k  < \frac{m + \frac{1}{2}}{\mathbf{n}}$, $0 < \varepsilon < \frac{1}{2}$, and $0 < \delta < 1$,  there exists a constant $C(\varepsilon, \delta)$ independent of $k,m, \mathbf{n}$ such that for any
	$m > C(\varepsilon, \delta),$
	we have
	\begin{equation*}
		\frac{\int_{0}^{1-\delta} r J_{m+\frac{1}{2}}^2\left(k\mathbf{n} r\right) \mathrm{d} r}{\int_{0}^1 r J_{m+\frac{1}{2}}^2\left(k\mathbf{n} r\right) \mathrm{d} r} < 2\varepsilon - \varepsilon^2.
	\end{equation*}
	Moreover, the constant $C(\varepsilon, \delta)$ coincides with its two-dimensional counterpart from Theorem \ref{thm:key_2D}.
\end{theorem}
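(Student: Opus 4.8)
The plan is to mirror, step for step, the three-part chain already carried out in the two-dimensional case (Lemma \ref{lem:estimate_Jm}, Lemma \ref{lem:estimate_Jm_integral}, Theorem \ref{thm:key_2D}), the only structural change being that the integer order $m$ is replaced throughout by the half-integer order $\nu := m + \tfrac12$ forced by the relation \eqref{eq:j_m}. Indeed, after the substitution $\tilde r = k\mathbf{n} r$ the target ratio becomes exactly the quantity $QI$ of \eqref{eq:key_estimate_2D} with $J_m$ replaced by $J_{\nu}$, so it suffices to re-derive the two auxiliary Bessel estimates for the order $\nu$ and then rerun the reduction of Theorem \ref{thm:key_2D}.

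First I would reprove Lemma \ref{lem:estimate_Jm} at order $\nu$. The monotonicity bound of \cite{KRA14} for the logarithmic derivative of $\mathcal{J}_\nu(x)=x^{-\nu}J_\nu(x)$ is valid at general order on $0 < x \le \nu + \tfrac12$; since the hypothesis $k < \nu/\mathbf{n}$ yields $k\mathbf{n} < \nu < \nu + \tfrac12$, both $k\mathbf{n}\tau$ and $k\mathbf{n}$ lie in the admissible range. Integrating exactly as before gives $\frac{J_\nu(k\mathbf{n}\tau)}{J_\nu(k\mathbf{n})} \le \tau^{\nu}\exp\!\left(\frac{k^2\mathbf{n}^2(1-\tau^2)}{2\nu+1}\right)$, and combining $k^2\mathbf{n}^2 < \nu^2$ with the elementary estimate $\frac{\nu^2}{2\nu+1} < \frac{\nu}{2}$ (which for $\nu = m+\tfrac12$ reads $\frac{(m+1/2)^2}{2m+2} < \frac{m+1/2}{2}$) produces the clean bound $\frac{J_\nu(k\mathbf{n}\tau)}{J_\nu(k\mathbf{n})} \le \tau^{\nu}\exp\!\left(\frac{\nu}{2}(1-\tau^2)\right)$.

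Next I would re-establish Lemma \ref{lem:estimate_Jm_integral} verbatim for order $\nu$: the Wronskian-type identity $[x^2\mathcal{W}_\nu(x)]' = 2xJ_\nu^2(x)$ with $\mathcal{W}_\nu = J_\nu^2 - J_{\nu-1}J_{\nu+1}$, the monotonicity of $xJ_\nu^2$ on $[0,k\mathbf{n}\tau]$ (valid because $k\mathbf{n}\tau < \nu < j_{\nu,1}$), and the lower bound $\mathcal{W}_\nu(x) > \frac{J_\nu^2(x)}{\nu+1}$ from \cite{JB91} all hold for any real order $\nu > -1$, hence for $\nu = m+\tfrac12$. Together they give $QI < 2(\nu+1)\tau^{2+2\nu}\exp(\nu(1-\tau^2))$, the exact analog of \eqref{eq:I}. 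Running the Theorem \ref{thm:key_2D} argument with $m$ replaced by $\nu$ then reduces the claim to $2(\nu+1)f^{\nu+1}(\tau) < 2\varepsilon - \varepsilon^2$ with $f(x)=x^2 e^{1-x^2}$; the same choice $a = \frac{1+f(\tau)}{2f(\tau)}$ shows this holds whenever $\nu + 1 > \frac{\ln\left(\left(\varepsilon-\frac12\varepsilon^2\right)\ln a\right)}{\ln(af(\tau))} = C(\varepsilon,\delta)+1$, i.e. whenever $m > C(\varepsilon,\delta) - \tfrac12$. Since this threshold is strictly smaller than the two-dimensional one, the identical constant $C(\varepsilon,\delta)$ of Theorem \ref{thm:key_2D} suffices, which is precisely the ``moreover'' assertion.

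The only point requiring genuine care — and thus the main obstacle — is confirming that each cited inequality retains its validity at the half-integer order $\nu = m+\tfrac12$ rather than merely at integer $m$: the \cite{KRA14} logarithmic-derivative estimate, the $\mathcal{W}_\nu$ identity, and the \cite{JB91} lower bound must all be invoked in their general-order forms, and one must verify that the admissibility range $x \le \nu + \tfrac12$ together with the ordering $j_{\nu,1} > \nu$ still accommodates the hypothesis $k\mathbf{n} < \nu$. Once this is checked, every estimate is the literal transcription of the planar argument under $m \mapsto m + \tfrac12$, and the favorable direction of the resulting half-unit shift in the threshold is exactly what allows the same constant $C(\varepsilon,\delta)$ to be reused.
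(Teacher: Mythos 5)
Your proposal is correct and is essentially the paper's own argument: the paper proves Theorem \ref{thm:key_3D} precisely by rerunning Lemma \ref{lem:estimate_Jm}, Lemma \ref{lem:estimate_Jm_integral}, and the reduction of Theorem \ref{thm:key_2D} with the order $m$ replaced by $m+\tfrac12$, using the general-order validity of the Krasikov and Joshi--Bissu bounds, which is why it only sketches the modifications in Section \ref{3Dcase}. Your observation that the hypothesis $k<\frac{m+1/2}{\mathbf{n}}$ keeps $k\mathbf{n}$ in the admissible range and that the resulting threshold $m>C(\varepsilon,\delta)-\tfrac12$ is weaker than the two-dimensional one correctly justifies the ``moreover'' clause that the same constant $C(\varepsilon,\delta)$ can be reused.
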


\begin{corollary}\label{cor:Coro2}
	For $m>C(\varepsilon, \delta), 0 < \mathbf{n} < 1$, $0 < k  < \frac{m+\frac{1}{2}}{\mathbf{n}}$, $0 < \varepsilon < \frac{1}{2}$, and $0 < \delta < 1$, we have
	\begin{equation*}
		\mathcal{E}^{(3)}_{\delta}(u)>1-\varepsilon.
	\end{equation*}
\end{corollary}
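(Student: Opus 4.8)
The plan is to obtain this as an immediate consequence of Theorem~\ref{thm:key_3D}, exactly paralleling the derivation of Corollary~\ref{cor:Coro1} from Theorem~\ref{thm:key_2D} in the two-dimensional setting. The key observation is that all of the analytic work has already been packaged into Theorem~\ref{thm:key_3D}; what remains is a one-line algebraic manipulation together with a sign check.

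First I would split the denominator integral over $(0,1)$ into its pieces over $(0,1-\delta)$ and $(1-\delta,1)$, so that the squared energy ratio is rewritten as a complement:
\begin{equation*}
	\left(\mathcal{E}^{(3)}_{\delta}(u)\right)^2 = \frac{\int_{1-\delta}^1 r J_{m+\frac{1}{2}}^2\left(k\mathbf{n} r\right)\,\mathrm{d}r}{\int_{0}^1 r J_{m+\frac{1}{2}}^2\left(k\mathbf{n} r\right)\,\mathrm{d}r} = 1 - \frac{\int_{0}^{1-\delta} r J_{m+\frac{1}{2}}^2\left(k\mathbf{n} r\right)\,\mathrm{d}r}{\int_{0}^1 r J_{m+\frac{1}{2}}^2\left(k\mathbf{n} r\right)\,\mathrm{d}r}.
\end{equation*}
Since the hypotheses $0<\mathbf{n}<1$, $0<k<\frac{m+\frac{1}{2}}{\mathbf{n}}$, $0<\varepsilon<\frac{1}{2}$, $0<\delta<1$, and $m>C(\varepsilon,\delta)$ coincide exactly with those of Theorem~\ref{thm:key_3D}, I may apply that theorem to bound the subtracted ratio by $2\varepsilon-\varepsilon^2$, which gives
\begin{equation*}
	\left(\mathcal{E}^{(3)}_{\delta}(u)\right)^2 > 1 - (2\varepsilon - \varepsilon^2) = (1-\varepsilon)^2.
\end{equation*}

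Finally I would take square roots. Because $\mathcal{E}^{(3)}_{\delta}(u)\geqslant 0$ by definition and $1-\varepsilon>0$ (using $\varepsilon<\frac{1}{2}<1$), the inequality $\left(\mathcal{E}^{(3)}_{\delta}(u)\right)^2>(1-\varepsilon)^2$ passes to $\mathcal{E}^{(3)}_{\delta}(u)>1-\varepsilon$, which is the claim. There is no genuine obstacle at the level of this corollary itself: the substantive difficulty lies entirely in establishing Theorem~\ref{thm:key_3D}, whose proof in turn mirrors the two-dimensional chain of Bessel-function estimates in Lemmas~\ref{lem:estimate_Jm} and~\ref{lem:estimate_Jm_integral} (with the order $m$ replaced by $m+\tfrac12$). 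The only point requiring a moment of care is the sign check that permits the square root to be taken, which is precisely why the constraint $\varepsilon<\frac{1}{2}$ is imposed throughout.
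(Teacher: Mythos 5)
Your proof is correct and is exactly the derivation the paper intends: Corollary~\ref{cor:Coro2} is stated as an immediate consequence of Theorem~\ref{thm:key_3D}, obtained by writing $\bigl(\mathcal{E}^{(3)}_{\delta}(u)\bigr)^2$ as one minus the interior ratio, invoking the bound $2\varepsilon-\varepsilon^2$, and taking square roots of $(1-\varepsilon)^2$ with the obvious sign check. The only quibble is your closing remark: positivity of $1-\varepsilon$ needs only $\varepsilon<1$, so the hypothesis $\varepsilon<\tfrac12$ is not imposed for the square-root step but is inherited from the construction of $C(\varepsilon,\delta)$ in Theorem~\ref{thm:key_2D}.
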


Now, we are at the stage to show the proof of Theorem \ref{thm:lower_bdd} for $N=3$.
\begin{proof}
	By \eqref{eq:E_decomposition_3D} and Remark\,\ref{rem:property_N_monotonic}, we have
	\begin{equation*}\label{eq:N^{(3),c}}
		N^{(3),c}(R, \varepsilon, \delta) = \sum^{\infty}\limits_{m=0}N_{m}^{(3),c}(R ,\varepsilon, \delta)\geq \sum^{M(R,\mathbf{n})}\limits_{m = \lceil C(\varepsilon, \delta) \rceil}N^{(3), c}_{m}\left(\min \left(R, \frac{m + \frac{1}{2}}{\mathbf{n}}\right) ,\varepsilon, \delta\right).
	\end{equation*}
	Moreover, for $m>C(\varepsilon, \delta)$, Corollary \ref{cor:Coro2} implies
	\begin{equation*}
		N^{(3),c}_{m}\left(\min \left(R, \frac{m+\frac{1}{2}}{\mathbf{n}}\right) ,\varepsilon, \delta\right) = N^{(3)}_{m}\left(\min \left(R, \frac{m+\frac{1}{2}}{\mathbf{n}}\right)\right).
	\end{equation*}
	Thus, \eqref{N_m_decomposition_3D} yields
	\begin{equation*}
		\begin{aligned}
			&	N^{(3), c}(R, \varepsilon, \delta)\\
			\geq & \sum^{M^{(3)}(R,\mathbf{n})}\limits_{m = \lceil C(\varepsilon, \delta) \rceil}  N_{m}^{(3)}\left(\min \left(R, \frac{m + \frac{1}{2}}{\mathbf{n}}\right)\right)\\
			= &\sum^{M^{(3)}(R,\mathbf{n})}\limits_{m = \lceil C(\varepsilon, \delta) \rceil} (2m+1) \left(N^{B}_{m+\frac{1}{2}}\left(\min \left(R, \frac{m+\frac{1}{2}}{\mathbf{n}}\right)\right)-N^{B}_{m+\frac{1}{2}}\left(\min \left(\mathbf{n}R, m + \frac{1}{2}\right)\right) \right)\\
			&+\mathcal{O}(R^2).
		\end{aligned}
	\end{equation*}
	Furthermore, since $N^{B}_{m+\frac{1}{2}}\left(\min \left(n R, m + \frac{1}{2}\right)\right) = 0$, Remark \ref{rem:3D} and \eqref{eq:M3} imply
	\begin{eqnarray*}\label{eq:N^c_lower_bound_3D}
		&&N^{(3), c}(R, \varepsilon, \delta) \\& \geq& \sum^{\left\lceil R - \frac{1}{2} \right\rceil }\limits_{m = \lceil C(\varepsilon, \delta) \rceil} (2m+1) N^{B}_{m+\frac{1}{2}}\left(\min \left(R, \frac{m+\frac{1}{2}}{\mathbf{n}}\right)\right)+\mathcal{O}(R^2) \nonumber \\
		& \geq&  \sum^{\left\lceil R - \frac{1}{2} \right\rceil}\limits_{m = \lceil C(\varepsilon, \delta) \rceil} (2m+1)\min \left(N_{m+\frac{1}{2}}^B(R), N_{m+\frac{1}{2}}^B\left(\frac{m+\frac{1}{2}}{\mathbf{n}}\right)\right)+\mathcal{O}(R^2)
	\end{eqnarray*}
	
	Analogous to the two-dimensional derivation in \eqref{eq:N_m^B(R)} and \eqref{eq:N_m^B(n)}, we obtain
	\begin{equation}\label{eq:N_m^B(n)_3D}
		N_{m+\frac{1}{2}}^B\left(\frac{m+\frac{1}{2}}{\mathbf{n}}\right) = \frac{m+\frac{1}{2}}{\pi} \left( \frac{\sqrt{1-\mathbf{n}^2}}{\mathbf{n} } -  \arccos(\mathbf{n})\right) + \mathcal{O}(\ln R),
	\end{equation}
	and
	\begin{equation}\label{eq:N_m^B(R)_3D}
		N_{m+\frac{1}{2}}^B(R) = \frac{m+\frac{1}{2}}{\pi}\left( \sqrt{\frac{R^2}{\left(m+\frac{1}{2}\right)^2} - 1} - \arccos\left(\frac{m+\frac{1}{2}}{R}\right) \right) + \mathcal{O}(\ln R).
	\end{equation}
	Applying the monotonicity of $N^B_{m+\frac{1}{2}}(R)$ yields
	\begin{equation*}
		\begin{aligned}
			&N^{(3),c}(R, \varepsilon, \delta)\\
			\geq& \left(\sum^{ \left\lceil \mathbf{n}R - \frac{1}{2} \right\rceil  }\limits_{m = 0} -  \sum^{ \lceil C(\varepsilon, \delta) \rceil - 1}\limits_{m = 0}\right) (2m+1) N_{m+\frac{1}{2}}^B\left(\frac{m+\frac{1}{2}}{\mathbf{n}}\right) \\
			&+  \sum^{ \left\lceil R - \frac{1}{2} \right\rceil }\limits_{m = \left\lceil  \mathbf{n}R  + \frac{1}{2} \right\rceil } (2m+1) N_{m+\frac{1}{2}}^B(R)+\mathcal{O}(R^2)\\
			\geq& \sum^{ \left\lceil \mathbf{n}R - \frac{1}{2} \right\rceil }\limits_{m = 0} (2m+1)N_{m+\frac{1}{2}}^B\left(\frac{m+\frac{1}{2}}{\mathbf{n}}\right)  +   \sum^{ \left\lceil R - \frac{1}{2} \right\rceil }\limits_{m = \left\lceil  \mathbf{n}R  + \frac{1}{2} \right\rceil} (2m+1)N_{m + \frac{1}{2}}^B(R) + \mathcal{O}\left( R^2 \right).
		\end{aligned}
	\end{equation*}
	Combining this with \eqref{eq:N_m^B(n)_3D}, \eqref{eq:N_m^B(R)_3D}, and the Euler-Maclaurin formula yields
	\begin{equation*}
		\begin{aligned}
			&N^{(3),c}(R, \varepsilon, \delta)\\
			\geq & \sum^{ \left\lceil \mathbf{n}R - \frac{1}{2} \right\rceil }\limits_{m = 0} (2m+1)\frac{m+\frac{1}{2}}{\pi} \left( \frac{\sqrt{1-\mathbf{n}^2}}{\mathbf{n} } -  \arccos(\mathbf{n})\right) \\
			& + \sum^{ \lceil R \rceil }\limits_{m = \left\lceil \mathbf{n}R + \frac{1}{2} \right\rceil} (2m+1)\frac{m+\frac{1}{2}}{\pi}\left( \sqrt{\frac{R^2}{(m+\frac{1}{2})^2} - 1} - \arccos\left(\frac{m+\frac{1}{2}}{R}\right) \right)   \\
			& + \mathcal{O}(R^2\ln R) \\
			\geq & \frac{2(\mathbf{n}R)^3}{3\pi} \left( \frac{\sqrt{1-\mathbf{n}^2}}{\mathbf{n} } -  \arccos(\mathbf{n})\right) + \mathcal{O}(R^2\ln R)\\
			& + \frac{2R^3}{9\pi}\left( 3\mathbf{n}^3 \arccos(\mathbf{n}) + \left(1 - 4\mathbf{n}^2\right) \sqrt{1-\mathbf{n}^2} \right) + \mathcal{O}(R^2\ln R)\\
			= & \frac{2R^3}{9\pi}\left( 1 - \mathbf{n}^2\right) ^{\frac{3}{2}} + \mathcal{O}(R^2\ln R).
		\end{aligned}
	\end{equation*}
	From \eqref{eq:N^3(R)}, we obtain
	\begin{equation*}
		\begin{aligned}
			\frac{N^{(3),c}(R,\varepsilon, \delta)}{N^{(3)}(R)} &\geq \frac{\frac{2R^3}{9\pi}\left( 1 - \mathbf{n}^2\right) ^{\frac{3}{2}} + \mathcal{O}(R^2\ln R)}{(2\pi)^{-3}\omega_3^2(1-\mathbf{n}^3)R^3 + \mathcal{O}(R^2\ln R)}\\
			& \geq \frac{ \left( 1 - \mathbf{n}^2\right) ^{\frac{3}{2}} }{ (1-\mathbf{n}^3)} +\mathcal{O}(\frac{\ln R}{R}),
		\end{aligned}
	\end{equation*}
	which establishing the lower bound \eqref{eq:lower_bdd} for three dimensions.
\end{proof}

The second estimate in \eqref{eq:lower_bdd} provides a lower bound for the density in $\mathbb{R}^3$, denoted by $B_{L}^{(3)}(\mathbf{n})$.
As shown in Figure \ref{fig:lower bound 3D}, $B_{L}^{(3)}(\mathbf{n})$ satisfies $B_{L}^{(3)}(0) = 1$ and $B_{L}^{(3)}(1) = 0$, and is monotonically decreasing with respect to $\mathbf{n}$. This implies that surface-localized eigenfunctions become increasingly prevalent as $\mathbf{n}\to 0$, while surface localization becomes less probable as $\mathbf{n} \to 1$.
\begin{figure}[h]
	\centering
	\includegraphics[width=0.8\linewidth]{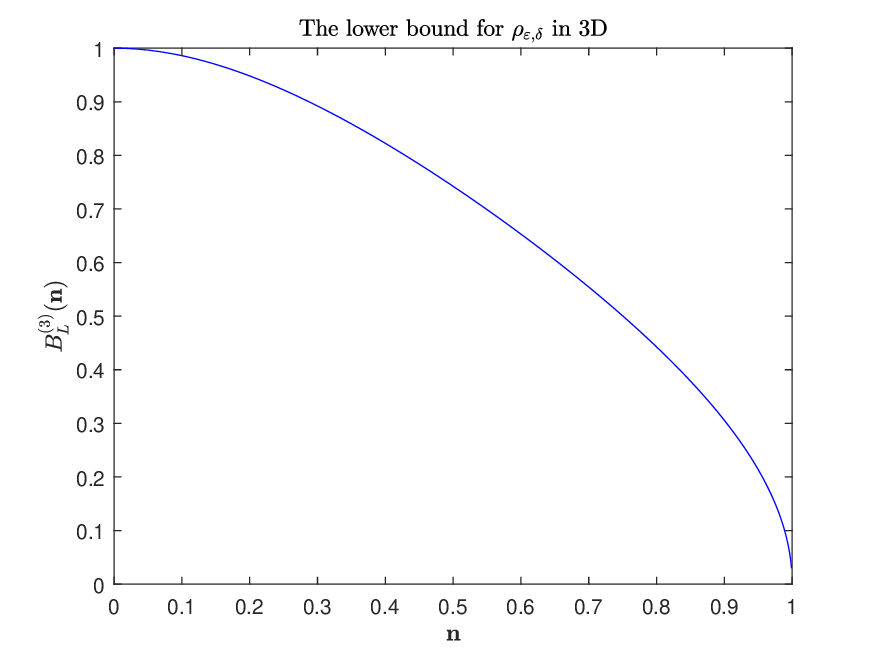}
	\caption{The graph of the lower bound for $\rho_{\varepsilon, \delta}$ in 3D: $B_{L}^{(3)}(\mathbf{n})$.}
	\label{fig:lower bound 3D}
\end{figure}


\subsection{Upper bound of $\rho_{\tilde{\varepsilon},\delta}$ in 3D}
In this subsection, we discuss the upper bound of  $\rho_{\tilde{\varepsilon}, \delta}$ in  three-dimensional case.
First,  we present the following lemma.
\begin{lemma}\label{lem_inside_estimate_3D}
	For $0 < \mathbf{n} < 1$, $0 < \delta < 1$, $0<\frac{\delta}{\mathbf{n}(1-\delta)}<\tilde{\delta}$, and $ \left(\frac{1}{\mathbf{n}} + \tilde{\delta} \right)\left(m+\frac{1}{2}\right) <k$, we have
	\begin{equation*}
		1-\left(\mathcal{E}^{(3)}_{\delta}(u)\right)^2 > \frac{1}{2}g(1-\delta, 1+\mathbf{n} \tilde{\delta}), \quad
		1-\left(\mathcal{E}^{(3)}_{\delta}(v)\right)^2 > \frac{1}{2}g\left(1-\delta, \frac{1+\mathbf{n} \tilde{\delta}}{\mathbf{n}}\right).
	\end{equation*}
\end{lemma}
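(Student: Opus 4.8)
The plan is to recognize that the two quantities in the statement have exactly the integral form already treated in the two-dimensional analysis, the only difference being that the Bessel order is now the half-integer $\nu=m+\tfrac{1}{2}$ instead of the integer $m$. From the expression for $\left(\mathcal{E}^{(3)}_{\delta}(u)\right)^2$ recorded just before Theorem~\ref{thm:key_3D} one has
\begin{equation*}
	1-\left(\mathcal{E}^{(3)}_{\delta}(u)\right)^2 = \frac{\int_{0}^{1-\delta} r J_{m+\frac{1}{2}}^2\left(k\mathbf{n} r\right) \mathrm{d} r}{\int_{0}^1 r J_{m+\frac{1}{2}}^2\left(k\mathbf{n} r\right) \mathrm{d} r},
\end{equation*}
and the companion identity for $v$ is obtained by replacing $k\mathbf{n}$ with $k$. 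Thus it suffices to invoke the general ratio estimate underlying Lemma~\ref{lem:general_inside_estimate}, read as a statement about a generic frequency $\kappa$, with $J_m$ replaced by $J_{m+\frac{1}{2}}$.

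The first step is therefore to record that Lemma~\ref{lem:general_inside_estimate} remains valid for the order $\nu=m+\tfrac{1}{2}$ and a generic frequency $\kappa$: for $0<\delta<1$, $\tfrac{\delta}{1-\delta}<\mu$, and $(1+\mu)\left(m+\tfrac{1}{2}\right)<\kappa$, one has
\begin{equation*}
	\frac{\int_{0}^{1-\delta} r J_{m+\frac{1}{2}}^2\left(\kappa r\right) \mathrm{d} r}{\int_{0}^1 r J_{m+\frac{1}{2}}^2\left(\kappa r\right) \mathrm{d} r} > \tfrac{1}{2}\, g(1-\delta, 1+\mu).
\end{equation*}
This follows by repeating the argument of Lemma~\ref{lem:general_inside_estimate} (in the spirit of \cite[Formula (2.15)]{CDHLW21}): the Lommel integral identity of Lemma~\ref{lem:integral_formulas} holds for any real order, and the Bessel bounds together with the WKB-type reasoning used there require only $\nu>-\tfrac{1}{2}$. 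Concretely, in the oscillatory regime $r>\nu/\kappa$ the energy density $r J_{\nu}^2(\kappa r)$ is governed on average by $\tfrac{r}{\pi\sqrt{(\kappa r)^2-\nu^2}}$, whose primitive reproduces exactly the algebraic combination encoded by $g$, while the prefactor $\tfrac{1}{2}$ is the rigorous safety margin inherited from the two-dimensional proof.

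The second step is to apply this estimate twice with matched parameters. For $u$ take $\kappa=k\mathbf{n}$ and $\mu=\mathbf{n}\tilde{\delta}$: the hypothesis $\tfrac{\delta}{\mathbf{n}(1-\delta)}<\tilde{\delta}$ is precisely $\tfrac{\delta}{1-\delta}<\mathbf{n}\tilde{\delta}=\mu$, and $\left(\tfrac{1}{\mathbf{n}}+\tilde{\delta}\right)\left(m+\tfrac{1}{2}\right)<k$ is precisely $(1+\mu)\left(m+\tfrac{1}{2}\right)<k\mathbf{n}$, giving the first claimed bound $\tfrac{1}{2}g(1-\delta,1+\mathbf{n}\tilde{\delta})$. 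For $v$ take $\kappa=k$ and $1+\mu=\tfrac{1+\mathbf{n}\tilde{\delta}}{\mathbf{n}}$, so that $\mu=\tfrac{1-\mathbf{n}}{\mathbf{n}}+\tilde{\delta}$; the frequency condition $(1+\mu)\left(m+\tfrac{1}{2}\right)<k$ is again the hypothesis, while $\mu>\tilde{\delta}>\tfrac{\delta}{\mathbf{n}(1-\delta)}>\tfrac{\delta}{1-\delta}$ (using $\mathbf{n}<1$) verifies the remaining condition, yielding $\tfrac{1}{2}g\!\left(1-\delta,\tfrac{1+\mathbf{n}\tilde{\delta}}{\mathbf{n}}\right)$. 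A brief check confirms that both argument pairs lie in the domain $\mathcal{D}$ of $g$, as the hypothesis forces $1+\mathbf{n}\tilde{\delta}>\tfrac{1}{1-\delta}$ and $\tfrac{1+\mathbf{n}\tilde{\delta}}{\mathbf{n}}>\tfrac{1}{1-\delta}$.

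The only genuine point requiring care, and hence the main obstacle, is the first step: confirming that the two-dimensional proof of Lemma~\ref{lem:general_inside_estimate} transfers verbatim to half-integer order. Since every ingredient—the Lommel identity, the monotonicity of $r J_\nu^2$ up to the turning point $\nu/\kappa$, and the asymptotic lower bound producing $g$—depends only on $\nu>-\tfrac{1}{2}$, the transfer is routine; one need only verify that no step implicitly exploited integrality of the order, after which the two applications above are immediate.
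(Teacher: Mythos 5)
Your proposal is correct and takes essentially the same route as the paper: the paper states this lemma without a separate proof, treating it precisely as the order-$\left(m+\frac{1}{2}\right)$ transcription of Lemma \ref{lem:general_inside_estimate} (itself adapted from \cite[Formula (2.15)]{CDHLW21}), applied once with frequency $k\mathbf{n}$ and $\mu=\mathbf{n}\tilde{\delta}$, and once with frequency $k$ and $1+\mu=\frac{1+\mathbf{n}\tilde{\delta}}{\mathbf{n}}$. Your explicit parameter matching, the verification that $\mu>\frac{\delta}{1-\delta}$ in both cases, and the domain check for $g$ are exactly the implicit content of the paper's reduction, so no gap remains.
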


Recalling (\ref{eq:tilde varepsilon}) and noting that $g$ is strictly increasing in $y$ for each fixed $x$, we have
\begin{equation*}
	\mathcal{E}_{\delta}(\psi) < 1 - \tilde{\varepsilon},\quad \psi = u \,\,\, \text{and} \,\,\, v.
\end{equation*}
Then, we define the set $\tilde{E}^{(3),uc}(R,\tilde{\varepsilon},\delta)$ as the collection of eigenvalues $k$ that satisfy the conditions in Lemma  \ref{lem_inside_estimate_3D},
\begin{equation*}
	\tilde{E}^{(3),uc}(R,\tilde{\varepsilon},\delta) = \left\{ (u,v,k) ; \text{$k$ is an ITE,} \left(\frac{1}{\mathbf{n}} + \tilde{\delta} \right)\left(m+\frac{1}{2}\right) < k < R\right\},
\end{equation*}
and the corresponding counting function is defined as
\begin{equation*}
	\tilde{N}^{(3),uc}(R,\tilde{\varepsilon},\delta) := \sharp \tilde{E}^{(3),uc}(R,\tilde{\varepsilon},\delta) .
\end{equation*}
Our analysis immediately reveals that
\begin{equation*}
	\tilde{N}^{(3),uc}(R,\tilde{\varepsilon},\delta)  \leq N^{(3), uc}(R,\tilde{\varepsilon},\delta).
\end{equation*}
Thus, we need only estimate $\tilde{N}^{(3),\mathrm{uc}}(R,\tilde{\varepsilon},\delta)$ and decompose $\tilde{E}^{(3),\mathrm{uc}}(R,\tilde{\varepsilon},\delta)$ as
\begin{equation*}
	\tilde{E}^{(3),uc}(R,\tilde{\varepsilon},\delta) = \bigcup_{m}^{\infty} \tilde{E}^{(3),uc}_m\left(R,\tilde{\varepsilon},\delta \right),
\end{equation*}
where
\begin{equation*}
	\tilde{E}^{(3),uc}_m\left(R, \tilde{\varepsilon} ,\delta\right) = \left\{ (u,v,k) ; k \text{ is a root of }F^{(3)}_m(x),\,\left(\frac{1}{\mathbf{n}} + \tilde{\delta}_1 \right)\left(m+\frac{1}{2}\right) < k < R\right\}.
\end{equation*}
The corresponding counting function is given by
\begin{equation*}
	\tilde{N}^{(3),uc}_m (R,\tilde{\varepsilon},\delta) := \sharp \tilde{E}^{(3),uc}_m\left(R, \tilde{\varepsilon} ,\delta\right).
\end{equation*}

Now, we are at the stage to show the proof of Theorem \ref{thm:upper_bdd} for $N=3$.
\begin{proof}
	For notational simplicity, we introduce the following notation:
	\begin{equation*}
		C^{(3)}(\mathbf{n}, \tilde{\delta}, R)=\left\lceil\frac{R}{\left(\frac{1}{\mathbf{n}}+\tilde{\delta}\right)}-\frac{1}{2}\right\rceil.
	\end{equation*}
	Therefore, one calculates that
	\begin{eqnarray}\label{eq:N3I1I2}
		&&\tilde{N}^{(3),uc}_m (R,\tilde{\varepsilon},\delta)\notag\\ &=& \sum_{m=0}^{C^{(3)}(\mathbf{n}, \tilde{\delta}, R)} N^{(3)}_m(R)-\sum_{m=0}^{C^{(3)}(\mathbf{n}, \tilde{\delta}, R)} N^{(3)}_m\left(\left(\frac{1}{\mathbf{n}}+\tilde{\delta}\right) \left(m+\frac{1}{2}\right)\right) +\mathcal{O}(R^2) \nonumber \\
		&:=& I^{(3)}_1 - I^{(3)}_2+\mathcal{O}(R^2).
	\end{eqnarray}
	
	For $I^{(3)}_1$, it admits the following decomposition:
	\begin{eqnarray}\label{eq:N3I3I4}
		I^{(3)}_1 & =& \sum_{m=0}^{C^{(3)}(\mathbf{n}, \tilde{\delta}, R)} \left(2m+1\right) N_{m+\frac{1}{2}}^B(R)- \sum_{m=0}^{C^{(3)}(\mathbf{n}, \tilde{\delta}, R)} \left(2m+1\right) N_{m+\frac{1}{2}}^B(\mathbf{n} R)+\mathcal{O}(R^2) \nonumber \\
		& :=&I^{(3)}_3-I^{(3)}_4+\mathcal{O}(R^2).
	\end{eqnarray}
	We further compute $I^{(3)}_3$ and $I^{(3)}_4$ as
	{\small \begin{align}\label{eq:N3I3}
			I^{(3)}_3  =& \sum_{m=0}^{C^{(3)}(\mathbf{n}, \tilde{\delta}, R)} \left(2m+1\right)\left( \frac{m+\frac{1}{2}}{\pi}\left( \sqrt{\frac{R^2}{\left(m+\frac{1}{2}\right)^2} - 1} - \arccos\left(\frac{m+\frac{1}{2}}{R}\right) \right) + \mathcal{O}(\ln R)\right) \nonumber \\
			= & \frac{2}{\pi}P_1^{(3)}\left(\frac{\mathbf{n}}{1+\mathbf{n} \tilde{\delta}}\right)R^3 + \mathcal{O}(R^2 \ln R),
	\end{align}}
	and
	{\small \begin{align}\label{eq:N3I4}
			I^{(3)}_4 = &\sum_{m=0}^{C^{(3)}(\mathbf{n}, \tilde{\delta}, R)} \left(2m+1\right)\left( \frac{m+\frac{1}{2}}{\pi}\left( \sqrt{\frac{\mathbf{n}^2R^2}{\left(m+\frac{1}{2}\right)^2} - 1} - \arccos\left(\frac{m+\frac{1}{2}}{\mathbf{n}R}\right) \right) + \mathcal{O}(\ln R)\right) \nonumber \\
			=&	\frac{2}{\pi}P_1^{(3)}\left(\frac{\mathbf{n}}{1+\mathbf{n} \tilde{\delta}}\right)\mathbf{n}^3R^3 + \mathcal{O}(R^2 \ln R),
	\end{align}}
	where
	\begin{equation*}
		P_1^{(3)}(x) = \frac{1}{9}+\left(\frac{4 x^2 - 1}{9}\right) \sqrt{1-x^2} - \frac{3}{9} x^3 \arccos (x).
	\end{equation*}
	Substituting (\ref{eq:N3I3}) and (\ref{eq:N3I4}) into (\ref{eq:N3I3I4}), we obtain
	\begin{equation}\label{eq:N3I3 est}
		I^{(3)}_1 = \frac{2}{\pi}\left( P^{(3)}_1\left(\frac{\mathbf{n}}{1+\mathbf{n}\tilde{\delta}}\right) - P^{(3)}_1\left(\frac{1}{1+\mathbf{n}\tilde{\delta}}\right) \mathbf{n}^3\right)R^3	+ \mathcal{O}(R^2\ln R).
	\end{equation}
	
	For $I_2^{(3)}$, one has
	\begin{equation}\label{eq:N3I4 est}
		\begin{aligned}
			I^{(3)}_2 =& \sum_{m=1}^{C^{(3)}(\mathbf{n}, \tilde{\delta}, R)}N^{(3)}_m\left(\left(\frac{1 + \mathbf{n}\tilde{\delta} }{\mathbf{n}}\right)\left(m+\frac{1}{2}\right)\right)  \\
			=&\frac{2}{3\pi}\left(\frac{\mathbf{n}}{1 + \mathbf{n}\tilde{\delta} }\right)^3\left(P_2\left(\frac{\mathbf{n}}{1 + \mathbf{n}\tilde{\delta} }\right) - P_2\left(\frac{1}{1 + \mathbf{n}\tilde{\delta} }\right)\right)R^3  +\mathcal{O}(R^2\ln R),
		\end{aligned}
	\end{equation}
	where $P_2$ is given by (\ref{eq:P2 def}).

	Substituting (\ref{eq:N3I3 est}) and (\ref{eq:N3I4 est}) into (\ref{eq:N3I1I2}), we obtain
	\begin{equation*}
		\begin{aligned}
			&\tilde{N}^{(3),uc} (R,\tilde{\varepsilon},\delta)\\
			=& \frac{2}{\pi}\left( P^{(3)}_1\left(\frac{\mathbf{n}}{1+\mathbf{n}\tilde{\delta}}\right) - P^{(3)}_1\left(\frac{1}{1+\mathbf{n}\tilde{\delta}}\right) \mathbf{n}^3\right)R^3 \\
			&- \frac{2}{3\pi}\left(\frac{\mathbf{n}}{1 + \mathbf{n}\tilde{\delta} }\right)^3\left(P_2\left(\frac{\mathbf{n}}{1 + \mathbf{n}\tilde{\delta} }\right) - P_2\left(\frac{1}{1 + \mathbf{n}\tilde{\delta} }\right)\right)R^3+ \mathcal{O}(R^2\ln R),
		\end{aligned}
	\end{equation*}
	together with
	\begin{equation*}
		\frac{ N^{(3),c} (R,\tilde{\varepsilon},\delta)}{N^{(3)}(R)}  \leq \frac{ N^{(3)}(R) - \tilde{N}^{(3),uc} (R,\tilde{\varepsilon},\delta)}{N^{(3)}(R)},
	\end{equation*}
	implies the conclusion of Theorem \ref{thm:upper_bdd} for $N=3$.
\end{proof}

To facilitate transparent comparison between the bounds, recall the lower bound and upper bound for $N=3$ given by
\begin{equation*}
	\frac{2 P^{(3)} (\mathbf{n})}{\pi(1-\mathbf{n}^2)} \quad \mbox{and} \quad  \frac{2 B_{U,\tilde{\delta}}^{(3)}\left(\mathbf{n}\right)}{\pi\left(1-\mathbf{n}^2\right)},
\end{equation*}
respectively. For a given pair $(\delta, \mathbf{n})$, consider the limiting process as $\tilde{\delta} \to \frac{\delta}{\mathbf{n}(1 - \delta)}$. Then:
\begin{enumerate}
	\item A direct computation shows that $\tilde{\varepsilon} \to 0$.
	
	\item The lower bound remains unchanged throughout this limiting process.
	
	\item Let $\mathcal{G}^{(3)}(\delta)$ denote the limiting difference between the upper and lower bounds:
	\begin{equation*}
		\mathcal{G}^{(3)}(\delta) := \lim_{\tilde{\delta} \to \frac{\delta}{\mathbf{n}(1 - \delta)}} \left( \frac{2 B^{(3)}_{U,\tilde{\delta}}(\mathbf{n})}{\pi(1 - \mathbf{n}^2)} - \frac{2 P^{(3)}(\mathbf{n})}{\pi(1 - \mathbf{n}^2)} \right).
	\end{equation*}
	One can verify that as $\delta \to 0$, the gap  $\mathcal{G}^{(3)}(\delta)$ vanishes. This indicates that the convergence is pointwise in $\mathbf{n}$.
	To illustrate, setting $\delta = 0.1$ and $\tilde{\delta} = \frac{2\delta}{\mathbf{n}(1 - \delta)}$, the corresponding lower and upper bounds are illustrated in Figure~\ref{fig:lower and upper bound 3D}.
\end{enumerate}
\begin{figure}
	\centering
	\includegraphics[width=0.8\linewidth]{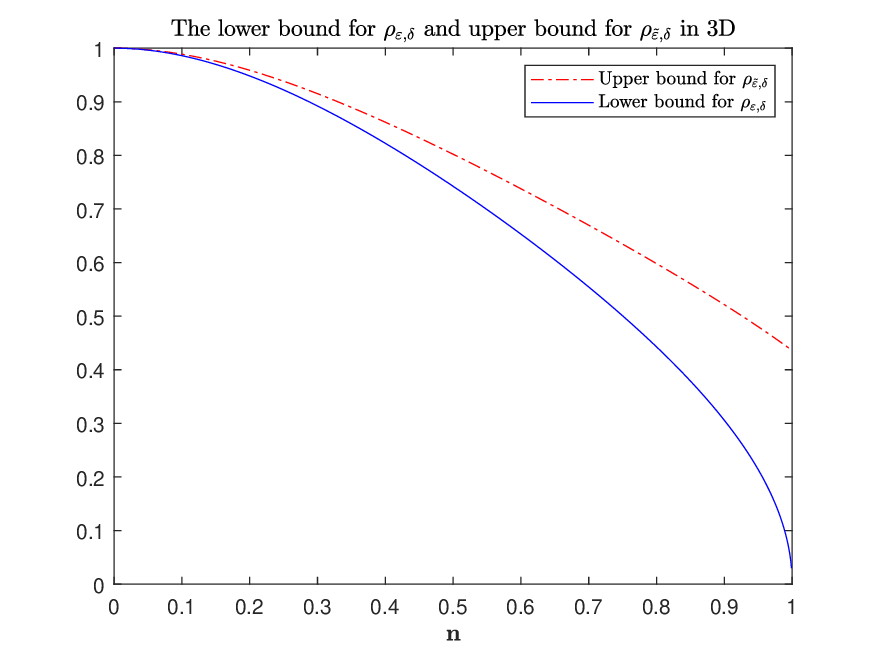}
	\caption{The graph of the lower bound for $\rho_{\varepsilon, \delta}$ and upper bound for $\rho_{\tilde{\varepsilon}, \delta}$ in 3D.}
	\label{fig:lower and upper bound 3D}
\end{figure}


\section*{Acknowledgments}
The work of Y. Jiang is supported in part by the National Key R\&D Program of China (2024YFA1012302). The work of H. Liu is supported by the Hong Kong RGC General Research Funds (Projects 11311122, 11300821, and 11304224), NSF/RGC Joint Research Fund (Project N\_CityU101/21) and the ANR/RGC Joint Research Fund (Project A\_CityU203/19). The work of K. Zhang is supported in part by China Natural National Science Foundation (Grant No. 12271207).


\end{document}